\def\beq{\begin{equation}}
\def\eeq{\end{equation}}
\def\ba{\begin{array}}
\def\ea{\end{array}}
\def\R{\mathbb R}
\newtheorem{thm}{Theorem}[section]
\newtheorem{lm}[thm]{Lemma}
\theoremstyle{definition}
\newtheorem{rem}[thm]{Remark}
\newtheorem{df}[thm]{Definition}
\newtheorem{assump}{Assumption}
\theoremstyle{remark}
\begin{document}
\pagestyle{plain}\today
\title{Boundary Lipschitz regularity of solutions for general semilinear elliptic equations in divergence form}

\author{Jingqi Liang\\
 \small{School of Mathematical Sciences, CMA-Shanghai, Shanghai Jiao Tong University}\\
 \small{Shanghai, China}\\\\
\small{ Lihe Wang}\\
 \small{Department of Mathematics, University of Iowa, Iowa City, IA, USA; School of Mathematical Sciences, Shanghai Jiao Tong University}\\
 \small{Shanghai, China}\\\\
\small{Chunqin Zhou}\\
 \small{School of Mathematical Sciences, CMA-Shanghai, Shanghai Jiao Tong University}\\
 \small{Shanghai, China}}

\footnote{The third author was partially supported by NSFC Grant 11771285 and 12031012.}

\begin{abstract}
In this paper, we study the nonhomogeneous Dirichlet problem concerning general semilinear elliptic equations in divergence form. We establish that the boundary Lipschitz regularity of solutions under some more weaker conditions on the coefficients, the boundary, the boundary function and the nonhomogeneous term. In particular, we assume that the nonhomogeneous term satisfies Dini continuity condition and Lipschitz Newtonian potential condition, which will be the optimal conditions to obtain  the boundary Lipschitz regularity of solutions.
\end{abstract}

\maketitle

{\bf Keywords:} Boundary Lipschitz regularity, Semilinear elliptic equation, Dini condition.

\section{Introduction}
In this work, we study the boundary Lipschitz regularity of solutions to the following semilinear elliptic equations in divergence form:
\begin{equation}\label{eq1}
\left\{
\begin{array}{rcll}
-D_{j}(a_{ij}D_{i}u)+b_{i}D_{i}u&=&-\text {div}\overrightarrow{\mathbf{F}}(x,u)\qquad&\text{in}~~\Omega,\\
u&=&g\qquad&\text{on}~~\partial\Omega, \\
\end{array}
\right.
\end{equation}
where $\Omega$ is a bounded domain in $\mathbb{R}^{n}$. The coefficients $(a_{ij})_{n\times n},~b_{i}$ are assumed to be measurable functions on $\Omega$, $(a_{ij}(x))_{n\times n}$ is symmetric and there are constants $\Lambda_{1},~\Lambda_{2}>0$ so that
\begin{equation}\label{aij}
\Lambda_{1}|\xi|^{2}\leq \sum\limits_{ij}a_{ij}(x)\xi_{i}\xi_{j}\leq\Lambda_{2}|\xi|^{2},~~\forall x\in \Omega,~~\xi\in \mathbb{R}^{n},
\end{equation}
and for some $q>n$
  \begin{equation}\label{b}
  \sum_{i}\|b_{i}\|_{L^{q}(\Omega)}\leq\Lambda_{2}.
  \end{equation}

An interesting question is what are the optimal conditions on the nonhomogeneous term $\overrightarrow{\mathbf{F}}(x,u)$ together with suitable conditions on the leading coefficients $a_{ij}$,   the boundary $\partial \Omega$  and the boundary function $g$ to obtain the pointwise boundary $C^{1}$ or Lipschitz regularity of solutions?

For the classical Poisson equation
$$\Delta u=f,
$$
in 2013, Andersson, Lindgren and Shahgholian  \cite {ALS} showed that the weakest assumption to get the $C^{1,1}$ regularity of $u$ is $f*N$ is $C^{1,1}$, where $N$ is the Newtonian potential and $*$ denotes the convolution. This condition on $f$ usually is called as $C^{1,1}$ Newtonian potential condition.

For the semilinear elliptic equation
$$\Delta u=f(x,u)\quad \text{in}~~~B_{1},$$
Shahgholian showed the interior $C^{1,1}$ regularity of $u$ under the conditions that $f(x,u)$ is Lipschitz continuous in $x$, uniformly in $u$, and $\partial_{u}f\geq-C$. The sharp conditions on $f(x,u)$ which ensure the $C^{1,1}$ regularity of $u$ were given by Indrei, Minne and Nurbekyan \cite{1} in 2017. They showed that the interior $C^{1,1}$ regularity of $u$ when $f(x,u)$ satisfies the uniform Dini continuity condition in $u$ and the uniform $C^{1,1}$ Newtonian potential condition in $x$.

For the linear elliptic equations in divergence form, Burch \cite {Bu} in 1978 showed that the weak solutions to
$$
-D_{j}(a_{ij}D_{i}u)+D_{i}(b_{i}u)+cu=f\quad\text{in}~~\Omega,
$$
are $C^2(\Omega)$ under the main assumptions of all partial derivatives of order 1 of $a_{ij}$ and $b_i$ together with $f$ being locally Dini continuous over $\Omega$.

For the semilinear elliptic equations in divergence form, in our latest paper \cite{17}, we studied the Dirichlet problem
\begin{equation}\label{eq2}
\left\{
\begin{array}{rcll}
\Delta u&=&\text {div}\overrightarrow{\mathbf{F}}(x,u)\qquad&\text{in}~~\Omega,\\
u&=&g\qquad&\text{on}~~\partial\Omega, \\
\end{array}
\right.
\end{equation}
and provide some sharp conditions on nonhomogeneous term and the boundary to ensure  the pointwise boundary Lipschitz regularity of $u$. Precisely, if the domain satisfies $C^{1,\text{Dini}}$ condition or Reifenberg $C^{1,\text{Dini}}$ condition at a boundary point $x_{0}$, and $\overrightarrow{\mathbf{F}}(x,u)$ satisfies uniform Dini continuity condition in $u$ and uniform Lipschitz Newtonian potential condition in $x$, then the solution is Lipschitz continuous at $x_{0}$.

Along this line of consideration, in this paper, we would like to generalize the results to more general semilinear elliptic equations $(\ref{eq1})$. Besides some suitable conditions on   $\overrightarrow{\mathbf{F}}(x,u)$, what weaker conditions could be proposed for $a_{ij}$?  In this purpose, let us recall the uniform elliptic equation in non-divergence form only with leading terms, which was considered in \cite {Ca},
\begin{equation}\label{saij}
a_{ij}(x)D_{ij}u=f.
\end{equation}
Caffarelli in \cite{Ca} comprehensively studied the qualitative properties of the viscosity solutions of $(\ref{saij})$ such as $L^{\infty}$ estimates, Harnack inequality, H$\ddot{o}$lder regularity theory and Calderon Zygmand $L^{p}$ estimates. In particular, if the coefficients $a_{ij}$ are uniformly close to being constant and $f$ has controlled growth, then a bounded solution $u$ in $B_1$ must be a weak solution of class $C^{1,\alpha}$, with an $\alpha$ that approaches 1 when $a_{ij}$  become closer to constants. Furthermore, if $a_{ij}$ are $C^{\alpha}$ and $f$ is $C^{\alpha}$, then $u$ belongs to $C^{2,\alpha}$ in the interior of $B_{1}$.

Motivated by the above results, we would like to provide the proper Dini decay conditions of the $L^{\infty}$ norm of $|a_{ij}-\delta_{ij}|$. Before we state our main results, we give some notations and definition.

{\bf Notations:}

$(\delta_{ij})_{n\times n}$:  the $n-$order identity matrix.

$|x|:=\sqrt{\sum\limits_{i=1}^{n} x_{i}^{2}}$: the Euclidean norm of $x=\left(x_{1}, x_{2}, \ldots, x_{n}\right) \in \mathbb{R}^{n}$.

$|x^{'}|:=\sqrt{\sum\limits_{i=1}^{n-1} x_{i}^{2}}$: the Euclidean norm of $x^{'}=\left(x_{1}, x_{2}, \ldots, x_{n-1}\right) \in \mathbb{R}^{n-1}$.

$B_{r}(x_{0}):=\left\{x \in \mathbb{R}^{n}:|x-x_{0}|<r\right\}$.

$B_{r}:=\left\{x \in \mathbb{R}^{n}:|x|<r\right\}$.

$\Omega_{r}:=B_{r}\cap\Omega$.

$T_{r}:=B_{r}\cap\{x_{n}=0\}=\left\{(x^{'},0) \in \mathbb{R}^{n}:|x^{'}|<r\right\}$.

$\vec{a}\cdot\vec{b}$: the standard inner product of $\vec{a}, \vec{b} \in \mathbb{R}^{n} .$

$\{\vec{e}_{i}\}_{i=1}^{n}$: the standard basis of $\mathbb{R}^{n}$.

\begin{df}\label{boundarydef} Let $x_{0}\in\partial\Omega$.   We say that $\partial\Omega$ is $C^{1,\text{Dini}}$ at $x_{0},$ if there exists a unit vector $\vec{n}$ and a positive constant $r_{0}$, a Dini modulus of continuity $\omega(r)$ satisfying
$$\int_{0}^{r_{0}} \frac{\omega(r)}{r} d r<\infty$$
such that for any $0<r\leq r_{0}$,
$$
B_{r}(x_0) \cap\left\{x\in\mathbb{R}^{n}:(x-x_{0}) \cdot \vec{n}>r \omega(r)\right\}\subset B_{r}(x_{0}) \cap \Omega \subset B_{r}(x_0) \cap\left\{x\in\mathbb{R}^{n}:(x-x_{0}) \cdot \vec{n}>-r \omega(r)\right\}.
$$

We say $\partial\Omega$ is $C^{1,\text{Dini}}$ if for any $x_{0}\in\partial\Omega$, $\partial\Omega$ is $C^{1,\text{Dini}}$ at $x_{0}$ with Dini modulus of continuity $\omega(r)$.
\end{df}
\begin{rem}
Any modulus of continuity $\omega(t)$ is non-decreasing, subadditive, continuous and satisfies $\omega(0)=0$ (see \cite{2}). Hence any modulus of continuity $\omega(t)$ satisfies
\begin{equation}\label{diniproperty}\frac{\omega(r)}{r}\leq2\frac{\omega(h)}{h},\quad 0<h<r.
\end{equation}
\end{rem}
\begin{df}\label{exrei}(Reifenberg $C^{1,\text{Dini}}$ condition). Let $x_{0}\in\partial\Omega.$ We say that $\Omega$ satisfies the $(r_{0}, \omega)$-Reifenberg $C^{1,\text{Dini}}$ condition at $x_{0}$ if there exists a positive constant $r_{0}$ and a Dini modulus of continuity $\omega(r)$ satisfying $\int_{0}^{r_{0}} \frac{\omega(r)}{r} d r<\infty$ such that for any $0<r\leq r_{0},$ there exists a unit vector $\vec{n}_{r}\in \mathbb{R}^{n}$ such that
$$
B_{r}(x_0) \cap\left\{x\in\mathbb{R}^{n}:(x-x_{0}) \cdot \vec{n}_{r}>r \omega(r)\right\}\subset B_{r}(x_{0}) \cap \Omega \subset B_{r}(x_0) \cap\left\{x\in\mathbb{R}^{n}:(x-x_{0}) \cdot \vec{n}_{r}>-r \omega(r)\right\}.
$$

We say $\partial\Omega$ is Reifenberg $C^{1,\text{Dini}}$ if for any $x_{0}\in\partial\Omega$, $\Omega$ satisfies Reifenberg $C^{1,\text{Dini}}$ condition at $x_{0}$ with Dini modulus of continuity $\omega(r)$.
\end{df}
The following lemma can be found in \cite{5}.
\begin{lm}\label{rem1.4}
If $\Omega$ satisfies $(r_{0}, \omega)$-Reifenberg $C^{1,\text{Dini}}$ condition, then there exists a bounded nonnegative function $S(\theta)\geq1$ such that $\left|\vec{n}_{r}-\vec{n}_{\theta r}\right| \leq S(\theta) \omega(r)$ for each $0<\theta<1$ and $0<r\leq r_{0}.$ Furthermore, for a fixed positive constant $0<\lambda<1,$ $\{\vec{n}_{\lambda^{i}r_{0}}\}_{i=0}^{\infty}$ is a Cauchy sequence. We can set $\lim\limits_{i\rightarrow\infty}\vec{n}_{\lambda^{i}r_{0}}=\vec{n}_{*}$.
\end{lm}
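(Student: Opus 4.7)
The plan is to deduce both assertions from a direct geometric comparison of the two slab inclusions at scales $r$ and $\theta r$. Restricting to the smaller ball $B_{\theta r}(x_0)\subset B_r(x_0)$, the Reifenberg condition yields simultaneously
\[
\{(x-x_0)\cdot \vec{n}_{r} > r\omega(r)\}\cap B_{\theta r}(x_0) \subset \Omega \cap B_{\theta r}(x_0) \subset \{(x-x_0)\cdot \vec{n}_{\theta r} > -\theta r\omega(\theta r)\}.
\]
To extract the closeness of the two unit normals, I would probe this with test points $y = x_0 + s\vec{n}_{r} + v$ where $v\perp \vec{n}_{r}$, $s^2+|v|^2 < \theta^2 r^2$, and $s$ is taken slightly above $r\omega(r)$. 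Such $y$ lies in $\Omega\cap B_{\theta r}(x_0)$, so $v\cdot\vec{n}_{\theta r} + s\,\vec{n}_{r}\cdot\vec{n}_{\theta r} > -\theta r\omega(\theta r)$; the symmetric choice $-v$ reverses the inequality, and together they give $|v\cdot\vec{n}_{\theta r}| < s\,\vec{n}_{r}\cdot\vec{n}_{\theta r} + \theta r\omega(\theta r)$. Decomposing $\vec{n}_{\theta r} = (\vec{n}_{r}\cdot\vec{n}_{\theta r})\vec{n}_{r} + w$ with $w\perp \vec{n}_{r}$, the left side becomes $|v\cdot w|$. Maximizing over admissible $v$ with $|v| \leq r\sqrt{\theta^2-\omega(r)^2}$ and using $\omega(\theta r)\leq \omega(r)$, I obtain $|w|\leq C\omega(r)/\theta$ with a universal $C$.

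A separate test point $y = x_0 + (\theta r-\varepsilon)\vec{n}_{r}$ lies in $\Omega\cap B_{\theta r}(x_0)$ whenever $\omega(r)<\theta$, forcing $\vec{n}_{r}\cdot\vec{n}_{\theta r}\geq -\omega(\theta r)$ after $\varepsilon\to 0$. This rules out a sign flip once $\omega(r)$ is small, so $\vec{n}_{r}\cdot\vec{n}_{\theta r} = \sqrt{1-|w|^2}$ and $|\vec{n}_{r}-\vec{n}_{\theta r}|^2 = 2-2\sqrt{1-|w|^2} \leq 2|w|^2$, yielding $|\vec{n}_{r}-\vec{n}_{\theta r}|\leq C'\omega(r)/\theta$ in the range $\omega(r)<\theta$. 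On the complementary range the trivial bound $|\vec{n}_{r}-\vec{n}_{\theta r}|\leq 2 \leq (2/\theta)\omega(r)$ closes the gap, so one may take $S(\theta)=\max(C',2)/\theta\geq 1$.

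For the Cauchy assertion, telescoping and the first part give $|\vec{n}_{\lambda^{i+k}r_0}-\vec{n}_{\lambda^i r_0}| \leq S(\lambda)\sum_{j=i}^{i+k-1}\omega(\lambda^j r_0)$. By the monotonicity of $\omega$, a sum-versus-integral comparison yields $\sum_{j\geq 0}\omega(\lambda^{j+1}r_0)\log(1/\lambda) \leq \int_0^{r_0}\omega(r)/r\,dr<\infty$, so the tail sums vanish and the sequence converges to some unit vector $\vec{n}_{*}$. The main technical nuisance is the sign selection of $\vec{n}_{r}\cdot\vec{n}_{\theta r}$, which is handled by the auxiliary test point above; once that is in place the remainder is elementary geometry plus the Dini integral comparison.
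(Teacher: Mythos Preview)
The paper does not prove this lemma at all; the sentence immediately preceding it reads ``The following lemma can be found in \cite{5}'', and no argument is given. So there is nothing in the paper to compare your proof against---you have supplied a self-contained argument where the authors simply invoked a reference.

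Your geometric argument is correct. The symmetric probing with $y=x_0+s\vec n_r\pm v$ does yield $|v\cdot w|<s(\vec n_r\cdot\vec n_{\theta r})+\theta r\,\omega(\theta r)$, and letting $s\downarrow r\omega(r)$, $|v|\uparrow r\sqrt{\theta^2-\omega(r)^2}$ with $v$ parallel to $w$ gives $|w|\le C\omega(r)/\theta$ in the regime $\omega(r)\ll\theta$. The auxiliary point $y=x_0+(\theta r-\varepsilon)\vec n_r$ correctly forces $\vec n_r\cdot\vec n_{\theta r}\ge -\omega(\theta r)$, which excludes the negative branch of $\pm\sqrt{1-|w|^2}$ once $\omega(r)$ is small enough relative to $\theta$; the trivial bound $|\vec n_r-\vec n_{\theta r}|\le 2$ handles the complementary regime. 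The telescoping plus the standard Dini sum--integral comparison for the Cauchy assertion is fine.

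One remark worth flagging: the $S(\theta)=\max(C',2)/\theta$ you obtain is \emph{not} bounded as $\theta\to 0$, so it does not literally match the statement's phrase ``bounded nonnegative function $S(\theta)$''. This appears to be an imprecision in the paper's wording rather than a defect in your proof: the only place the lemma is used is at the single fixed ratio $\theta=\lambda$, and a uniform-in-$\theta$ bound is in fact not available in general (when $\theta<\omega(r)$ the slab at scale $r$ carries no information inside $B_{\theta r}(x_0)$). Your bound is exactly what the iteration in Section~3 and the Cauchy-sequence conclusion require.
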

\begin{rem}\label{rem1.5}
In the sequel, we assume $\partial\Omega$ is $C^{1,\text{Dini}}$ or Reifenberg $C^{1,\text{Dini}}$. Then by the definition, for each $x_{0}\in\partial\Omega$ and $\omega(r_{0})<1$, it follows that
$$\lim_{r\rightarrow0}\frac{|B_{r}(x_{0})\cap\Omega^{c}|}{|B_{r}(x_{0})|}=\frac{1}{2}.
$$
\end{rem}
\begin{df}\label{bvdini}
Let $x_{0} \in \partial \Omega $.  The boundary value $g$ is said to be $C^{1,\text{Dini}}$ at $x_{0}$ with respect to a function $v_{x_0}(x)$,   if there exists a constant vector $\vec{a}$, a positive constant $r_{0}$ and a Dini modulus of continuity $\sigma(r)$ satisfying $\int_{0}^{r_{0}} \frac{\sigma(r)}{r} d r<\infty$ such that for any $0<r \leq r_{0}$ and $x \in \partial \Omega \cap B_{r}(x_{0})$,
$$
|g(x)-v_{x_0}(x)-(g(x_{0})-v_{x_0}(x_{0}))-\vec{a} \cdot(x-x_{0})| \leq r \sigma(r).
$$
\end{df}

Next we propose the following assumptions on $\overrightarrow{\mathbf{F}}(x,u)$ and all $a_{ij}$.
\begin{assump}\label{as1} $\overrightarrow{\mathbf{F}}(x,t)\in L^{\infty}(B_{d}\times \R)$ where $d$ is large enough. Moreover
$\overrightarrow{\mathbf{F}}(x,t)$ is Dini continuous in $t$ with continuity modulus $\varphi(r)$, uniformly in $x$, i.e.
$$\left|\overrightarrow{\mathbf{F}}(x,t_{2})-\overrightarrow{\mathbf{F}}(x,t_{1})\right|\leq\varphi(|t_{2}-t_{1}|),
$$
and $\int_{0}^{t_{0}} \frac{\varphi(t)}{t} dt<\infty$, for some $t_{0}>0.$
\end{assump}
\begin{assump}\label{as2}
For every boundary point $x_{0}$ and each $t\in \R$, there exists a function $v_{x_{0}}(\cdot,t)$ in $B_{1}(x_{0})$ satisfying
$$
\Delta v_{x_{0}}(\cdot, t)=\text{div}\overrightarrow{\mathbf{F}}(\cdot,t) \quad \text{in}~~B_{1}(x_{0}).
$$
Furthermore, $v_{x_{0}}(\cdot,t)$  is a Lipschitz function which is  uniform in $x_0$ and $t$ with Lipschitz constant $T$.
\end{assump}
\begin{rem}
(1) We can always assume that $\Omega\subset B_{\frac{d}{2}}$.\\
(2) In the sequel, for $g(x)\in L^{\infty}(\partial\Omega)$ is the boundary value of $(\ref{eq1})$, then for every boundary point $x_{0}$, we let $v_{x_{0}}(x)$ solve
$$\Delta v_{x_{0}}(x)=\text{div}\overrightarrow{\mathbf{F}}(x,g(x_{0})) \quad \text{in}~~B_{1}(x_{0}).$$
\end{rem}
We also assume the following conditions on $a_{ij}$.

\begin{assump}\label{as3} For each $x_{0}\in \partial \Omega$, there exists a Dini modulus of continuity $\omega_{1}(r)$ satisfying $\int_{0}^{r_{0}} \frac{\omega_{1}(r)}{r} d r<\infty$ and a positive constant $r_{0}$ such that for any $0<r\leq r_{0}$,
    $$\|a_{ij}(x)-\delta_{ij}\|_{L^{\infty}(B_{r}(x_{0})\cap\Omega)}\leq\omega_{1}(r).$$
 \end{assump}

\begin{thm}\label{mr1}
Let $x_{0} \in \partial \Omega .$ $\overrightarrow{\mathbf{F}}(x,t)$ and all $a_{ij}$ satisfy Assumption \ref{as1}, \ref{as2} and \ref{as3}. If $\partial\Omega$ is $C^{1,\text{Dini}}$ at $x_{0}$ and $g$ is $C^{1,\text{Dini}}$ at $x_{0}$ with respect to $v_{x_{0}}(x)$, then the solution of $(\ref{eq1})$, $(\ref{aij})$ and $(\ref{b})$ is Lipschitz continuous at $x_{0}$, i.e.
$$|u(x)-u(x_{0})|\leq C|x-x_{0}|,\quad \forall~x\in B_{1}\cap\Omega.$$
where $C=C(n, \Lambda_{1}, \Lambda_{2}, \Omega, T, \|u\|_{L^{\infty}(\Omega)}, \|g\|_{L^{\infty}(\partial\Omega)})$.
\end{thm}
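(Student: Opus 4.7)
The plan is to establish pointwise boundary Lipschitz continuity at $x_0$ via a perturbation-and-iteration scheme, in the spirit of the authors' previous work \cite{17}, now carrying the additional coefficient oscillation $\omega_1$ through the argument. After translating so that $x_0=0$ and rotating so that $\vec n=\vec e_n$, one first absorbs the inhomogeneous datum by working with $w:=u-v_{x_0}$. A direct computation shows that $w$ satisfies
\[
-D_j(a_{ij}D_i w)+b_i D_i w
= -\text{div}\bigl(\vec F(x,u)-\vec F(x,g(x_0))\bigr)
+ \text{div}\bigl((a_{ij}-\delta_{ij})D_i v_{x_0}\bigr)
- b_i D_i v_{x_0}
\]
in $\Omega_{r_0}$, with boundary value $w|_{\partial\Omega\cap B_{r_0}}=g-v_{x_0}$ lying within $r\sigma(r)$ of an affine function by Definition~\ref{bvdini}. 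The advantage is that the uniform Lipschitz bound on $v_{x_0}$ from Assumption~\ref{as2} converts the bare divergence $-\text{div}\vec F(x,u)$ into terms whose sizes are controlled by $\varphi$, $\omega_1$, and the $L^q$ norm of $b_i$.

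The heart of the proof is an improvement-of-flatness lemma: there exist $\lambda\in(0,\tfrac12)$ and $C_0$, depending only on $n,\Lambda_1,\Lambda_2$, such that for every $\epsilon>0$ there is $\delta>0$ so that, whenever $\|w\|_{L^\infty(\Omega_1)}\le 1$ and all the small quantities $\bigl(\omega_1(1),\omega(1),\sigma(1),\varphi(\cdot)\big|_{[0,1]},\|b_i\|_{L^q(\Omega_1)}\bigr)$ are below $\delta$, there exists a linear function $\ell(x)=\vec b\cdot x$ with $|\vec b|\le C_0$ such that
\[
\|w-\ell\|_{L^\infty(\Omega_\lambda)} \le \lambda\bigl(\|w\|_{L^\infty(\Omega_1)}+\epsilon\bigr).
\]
I would prove this by contradiction and compactness: a violating sequence $w_k$, after normalization, would satisfy uniform $W^{1,2}$ energy estimates, and De~Giorgi--Nash--Moser H\"older continuity up to the boundary yields a convergent subsequence whose limit $w_\star$ is bounded, harmonic in the half-ball $B_1^+$, and affine on $T_1$. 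Standard boundary $C^{1,\alpha}$ regularity for harmonic functions on a flat boundary then furnishes an affine approximation at scale $\lambda$, contradicting the supposed failure.

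With this lemma in hand, I apply it iteratively to the Lipschitz rescalings $w_k(x):=w(\lambda^k x)/\lambda^k$. At step $k$ the smallness inputs are of order $\omega_1(\lambda^k)+\omega(\lambda^k)+\sigma(\lambda^k)+\varphi(\lambda^k)+\lambda^{k(1-n/q)}\|b_i\|_{L^q}$. This produces linear functions $\ell_k(x)=\vec b_k\cdot x$ with
\[
\sup_{\Omega_{\lambda^k}}\Bigl|w-\textstyle\sum_{j\le k}\ell_j\Bigr|\le C\lambda^k,
\qquad
|\vec b_{k+1}-\vec b_k|\le C\bigl(\omega_1(\lambda^k)+\omega(\lambda^k)+\sigma(\lambda^k)+\varphi(\lambda^k)\bigr)+C\lambda^{k\alpha}.
\]
Since each of $\omega_1,\omega,\sigma,\varphi$ is Dini integrable, the series $\sum_k|\vec b_{k+1}-\vec b_k|$ converges, so $\vec b_k\to\vec b_\infty$ with $|\vec b_\infty|\le C$. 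Consequently $|w(x)|\le C|x|$ for $x\in\Omega_{r_0}$, and adding back the uniformly Lipschitz function $v_{x_0}$ yields $|u(x)-u(x_0)|\le C|x-x_0|$.

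The main obstacle is the compactness step in the improvement-of-flatness lemma: unlike the classical situation, one must simultaneously handle (i)~divergence-form lower-order coefficients $b_i\in L^q$ with $q>n$ (no uniform $L^\infty$ bound), (ii)~variable, merely Dini-close principal coefficients, and (iii)~a perturbed boundary whose flat model is attained only in a Dini sense. Uniform $W^{1,2}$ energy bounds together with a boundary Caccioppoli inequality suffice to extract a $W^{1,2}$-weak and $L^2$-strong limit, but identifying the limit as the Laplace equation on the exact half-ball $B_1^+$ requires carefully passing the domain perturbation $\omega(r)\to 0$ through the trace and through the weak formulation.
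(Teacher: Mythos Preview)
Your overall architecture (subtract $v_{x_0}$, prove a one-step improvement-of-flatness, iterate at dyadic scales, sum the linear corrections via the Dini hypotheses) matches the paper's. The essential difference---and it is a genuine gap, not merely a stylistic choice---is that you propose to obtain the improvement lemma by \emph{compactness}, whereas the paper proves it \emph{constructively} with an explicit rate. This matters because Dini iteration cannot close on a purely qualitative improvement.

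Concretely: your lemma yields, for each $\epsilon>0$, a threshold $\delta(\epsilon)$ so that smallness $\le\delta$ implies $\|w-\ell\|_{L^\infty(\Omega_\lambda)}\le\lambda(\|w\|_{L^\infty(\Omega_1)}+\epsilon)$ with $|\vec b|\le C_0$. Iterating this, the Lipschitz rescalings satisfy $\|\tilde w_k\|\le \|w\|+k\epsilon$, so the successive corrections obey only $|\vec b_{k+1}-\vec b_k|\le C_0(\|w\|+k\epsilon)$, which is \emph{not} summable. The bound you then assert,
\[
|\vec b_{k+1}-\vec b_k|\le C\bigl(\omega_1(\lambda^k)+\omega(\lambda^k)+\sigma(\lambda^k)+\varphi(\lambda^k)\bigr)+C\lambda^{k\alpha},
\]
does not follow from a compactness lemma: compactness gives no modulus relating the error to the size of the perturbations, and Dini integrability of $\omega,\omega_1,\sigma,\varphi$ says nothing about the Dini integrability of $\epsilon(\delta_k)$ for an unspecified inverse modulus $\epsilon(\cdot)$. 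In the $C^{1,\alpha}$ setting one gets away with compactness because the improvement factor is $\lambda^{1+\alpha}$ and the corrections decay geometrically; under a mere Dini hypothesis this slack is absent.

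The paper supplies exactly what is missing: Lemma~\ref{fs} builds, by barriers and the maximum principle, an explicit harmonic approximant with error
\[
C\|g\|_{L^\infty}+C\bigl(\|\overrightarrow{\mathbf F}\|_{L^{2q}}+\|f\|_{L^q}\bigr)+C\bigl(\varepsilon^{\alpha}+(\varepsilon_1+\varepsilon_2)^{\alpha/6}+\sqrt{\varepsilon_1+\varepsilon_2}\bigr)\|u\|_{L^\infty},
\]
and the Key Lemma~\ref{kl} converts this into a one-step linear approximation with the same explicit dependence. Feeding $\varepsilon=\omega(\lambda^i)$, $\varepsilon_1=\omega_1(\lambda^i)$, $\varepsilon_2=\omega_2(\lambda^i)$ into the iteration (Lemma~\ref{bmkm}) produces the recursion $M_{i+1}\le\xi_iM_i+\eta_i$ with $\xi_i\le\tfrac14\lambda$ and $\eta_i/\lambda^{i+1}$ a sum of terms that are genuinely Dini-summable (plus cross terms like $|N_i|\omega(\lambda^i)$ handled by a bootstrap in Lemma~\ref{con}). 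If you wish to keep your outline, you must replace the compactness step by a direct estimate of this type; the ``main obstacle'' you flag about passing to the limit in the domain is in fact secondary to the missing quantitative rate.
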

\begin{thm}\label{mr2}
Let $x_{0} \in \partial \Omega .$ $\overrightarrow{\mathbf{F}}(x,t)$ and all $a_{ij}$ satisfy Assumption \ref{as1}, \ref{as2} and \ref{as3}. If $\partial\Omega$ satisfies Reifenberg $C^{1,\text{Dini}}$ condition at $x_{0}$ and $g$ is $C^{1,\text{Dini}}$ at $x_{0}$ with respect to $v_{x_{0}}(x)$,
then the solution of $(\ref{eq1})$, $(\ref{aij})$ and $(\ref{b})$ is Lipschitz continuous at $x_{0}$.
\end{thm}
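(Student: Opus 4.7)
The plan is to run the iterative linear-approximation scheme from the proof of Theorem \ref{mr1} scale by scale, but at each dyadic scale $r_k=\lambda^k r_0$ the approximating half-space will be the one determined by the Reifenberg normal $\vec{n}_{r_k}$ rather than a single fixed normal. The slow drift $|\vec{n}_{r_k}-\vec{n}_{r_{k+1}}|\le S(\lambda)\omega(r_k)$ provided by Lemma \ref{rem1.4} must be absorbed into the error budget of the iteration.

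After translating so that $x_0=0$, I would first subtract $u(0)+(v_{x_0}-v_{x_0}(0))+\vec{a}\cdot x$ from $u$, where $\vec{a}$ is the vector in Definition \ref{bvdini}. The resulting function $w$ solves an equation of the same divergence form as (\ref{eq1}) in which the new right-hand side, obtained after cancelling $\Delta v_{x_0}=\text{div}\overrightarrow{\mathbf{F}}(\cdot,g(0))$ against the Laplacian, has the form $\text{div}\widetilde{\mathbf{F}}+b_iD_i v_{x_0}$ with
\begin{equation*}
|\widetilde{\mathbf{F}}(x)|\le\varphi(|u(x)-g(0)|)+T\,\omega_1(|x|)\quad\text{on }B_r,
\end{equation*}
and whose boundary trace satisfies $|w(x)|\le r\sigma(r)$ on $\partial\Omega\cap B_r$ by Definition \ref{bvdini}. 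The lower-order term $b_iD_i u$ is a mild perturbation at small scales because $q>n$ in (\ref{b}).

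The main technical step is an approximation lemma at scale $r_k$: the rescaling $w_k(y):=w(r_k y)/r_k$ is close in $L^\infty$ to a harmonic function $h_k$ on the half-ball $B_1\cap\{y\cdot\vec{n}_{r_k}>0\}$ vanishing on the flat face. This is proved by a standard compactness/contradiction argument using Reifenberg flatness at scale $r_k$, $\|a_{ij}-\delta_{ij}\|_\infty\le\omega_1(r_k)$, the rescaled estimate $r_k^{1-n/q}\sum_i\|b_i\|_{L^q(B_{r_k})}\to 0$, and Dini continuity of $\overrightarrow{\mathbf{F}}$ in $u$. Classical boundary Schauder for $h_k$ then produces a vector $\vec{a}_k^\sharp$ parallel to $\vec{n}_{r_k}$ with $|h_k(y)-\vec{a}_k^\sharp\cdot y|\le\lambda^{1+\alpha}$ on $B_\lambda\cap\{y\cdot\vec{n}_{r_k}>0\}$, which unrolls to $\|w-\vec{a}_k\cdot x\|_{L^\infty(\Omega_{r_k})}\le Cr_k$ with the Cauchy recursion
\begin{equation*}
|\vec{a}_{k+1}-\vec{a}_k|\le C\eta_k,\qquad \eta_k:=\omega(r_k)+\omega_1(r_k)+\varphi(Cr_k)+\sigma(r_k)+S(\lambda)\omega(r_k).
\end{equation*}
Dini integrability of $\omega,\omega_1,\varphi,\sigma$ then makes $\sum_k\eta_k$ finite, so $\vec{a}_k\to\vec{a}_\infty$ with $|\vec{a}_\infty|\le C$. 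For any $x\in\Omega_{r_0}$, choosing $k$ with $r_{k+1}\le|x|<r_k$ gives $|w(x)|\le|\vec{a}_k\cdot x|+Cr_k\le C|x|$, and undoing the initial reductions yields $|u(x)-u(x_0)|\le C|x-x_0|$.

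The hard part will be running this iteration in the presence of a rotating tangent direction: the approximating affine functions $\vec{a}_k\cdot x$ sit in slightly different half-spaces at different scales, and the extra drift contribution $S(\lambda)\omega(r_k)$ in the recursion for $\vec{a}_k$, coming from Lemma \ref{rem1.4}, must not destroy summability. The Dini condition $\int_0^{r_0}\omega(r)/r\,dr<\infty$ is exactly what ensures that both the cumulative rotation of the normals and the cumulative Cauchy drift of the affine functions remain finite, and this is the key place where Theorem \ref{mr2} goes genuinely beyond Theorem \ref{mr1}.
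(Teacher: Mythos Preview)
Your proposal is correct and follows essentially the same route as the paper: the paper itself only treats Theorem~\ref{mr2} in a one-sentence remark, stating that the four-step iteration for Theorem~\ref{mr1} goes through verbatim once one replaces the fixed normal $\vec e_n$ by the scale-dependent Reifenberg normal $\vec n_{r_k}$ and uses Lemma~\ref{rem1.4} to absorb the drift $|\vec n_{r_k}-\vec n_{r_{k+1}}|\le S(\lambda)\omega(r_k)$ into the summable error budget. The only minor methodological difference is that you invoke a compactness/contradiction argument for the harmonic approximation at each scale, whereas the paper's Lemma~\ref{fs} obtains the same approximation constructively via maximum-principle comparisons and barrier functions; both approaches yield the same Key Lemma~\ref{kl} (in its rotated form, cf.\ Remarks~\ref{rem2.3} and~\ref{rem2.7}), so this does not affect the overall strategy.
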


As for the pointwise boundary $C^{1}$ or Lipschitz regularity of solutions to equations in non-divergence form or the fully nonlinear equations, there have been extensive results in the past two decades.  For the following second order uniformly elliptic equations,
\begin{eqnarray*}
\left\{
\begin{array}{rcll}
-a_{ij}(x)\frac{\partial^{2}u}{\partial x_{i}\partial x_{j}}&=&f(x) \qquad&\text{in}~~\Omega,\\
u&=&0\qquad&\text{on}~~\partial\Omega, \\
\end{array}
\right.
\end{eqnarray*}
Li and Wang proved that the solution is differentiable at any boundary point when the domain is convex and they extended their results to the nonhomogeneous boundary value Dirichlet problem (see \cite{13}, \cite{14}). In \cite{4} and \cite{5}, Huang, Li and Wang got the boundary Lipschitz regularity under more general geometrical conditions, $C^{1,\text{Dini}}$ condition or Reifenberg $C^{1,\text{Dini}}$ condition. Furthermore if $\partial\Omega$ is punctually $C^{1}$ additionally, they obtained the boundary differentiability regularity. They used an iteration method and their main tools are the Alexandroff-Bakelman-Pucci maximum principle, Harnack inequality and barrier technique.

Besides, there are also many important results in the fully nonlinear elliptic equations. In \cite{3}, Ma and Wang proved the boundary differentiability of viscosity solutions for fully nonlinear elliptic equations under Dini conditions and boundary $C^{1,\alpha}$ regularity was also obtained as a corollary. Recently, Lian, Wu and Zhang \cite{17} considered the following equations,
\begin{eqnarray*}
\left\{
\begin{array}{rcll}
u&\in&S(\lambda,~\Lambda,~f) \qquad&\text{in}~~\Omega,\\
u&=&g\qquad&\text{on}~~\partial\Omega,\\
\end{array}
\right.
\end{eqnarray*}
where $S(\lambda,~\Lambda,~f)$ denotes the Pucci class with uniform constants $\lambda$ and $\Lambda$. They proved that $u$ is Lipschitz continuous at $x_{0}\in\partial\Omega$ if the domain $\Omega$ satisfies exterior Reifenberg $C^{1,\text{Dini}}$ condition at $x_{0}$ and the boundary value $g$ is $C^{1,\text{Dini}}$ at $x_{0}$.

For elliptic equations in the divergence form, to prove Lipschitz regularity of solutions, the basic idea is to seek for suitable harmonic functions to be compared solutions with, which is mainly different from the technique used to treat equations in the non-divergence form and the fully nonlinear elliptic equations. We will show our key idea to prove Theorem $\ref{mr1}$  in section 2. We first approximate $u$ by a harmonic function $h$ in $B_{\frac{1}{16}}\cap\Omega$ and then approximate $h$ by a linear polynomial in a sufficiently small ball. In section 3 we prove the boundary Lipschitz regularity by an iteration method. A scaling argument allows us to iterate the approximation result in Section 2 and then construct a sequence of linear polynomials converging uniformly to a desired one.
\section{Preliminary tools}
In this section firstly we will give a general approximation lemma of the following elliptic equation:
\begin{eqnarray*}
-D_{j}(a_{ij}D_{i}u)+b_{i}D_{i}u=f-\text{div}\overrightarrow{\mathbf{F}} \quad \text{in}~~\Omega,
\end{eqnarray*}
where $\Omega$ is a bounded domain. Our aim is to approximate $u$ by a linear function. To begin with, we find a proper harmonic function $h$ to approximate $u$, then the linear function is derived from $h$ naturally. To prove this, our main tool is the following local maximum principle and $C^{\alpha}$ estimates, Theorem 8.16, Theorem 8.24 and Theorem 8.29 in \cite{DT}.
\begin{df}
We say $u\in W^{1,2}(\Omega)$ is a weak solution of
\begin{equation}\label{2}
-D_{j}(a_{ij}D_{i}u)+b_{i}D_{i}u=f-\text{div}\overrightarrow{\mathbf{F}}\qquad\text{in}~~\Omega,
\end{equation}
if for all $\phi\in W^{1,2}_{0}(\Omega)$, which is called a test function, we have
$$\int_{\Omega}a_{ij}D_{i}uD_{j}\phi+b_{i}D_{i}u\phi dx=\int_{\Omega}f\phi+\overrightarrow{\mathbf{F}}\cdot D\phi dx.
$$
\end{df}
\begin{thm}\label{lemma1}
Let $\Omega$ be a bounded domain in $\mathbb{R}^{n}.$ We suppose that $\overrightarrow{\mathbf{F}}\in L^{p}(\Omega),$ $f\in L^{\frac{p}{2}}(\Omega),$ for some $p>n$, $a_{ij}$ and $b_{i}$ satisfy $(\ref{aij})$ and $(\ref{b})$. Then if $u$ is a $W^{1,2}(\Omega)$ solution of $(\ref{2})$, then we have
$$\|u\|_{L^{\infty}(\Omega)}\leq\|u\|_{L^{\infty}(\partial\Omega)}+C(\|\overrightarrow{\mathbf{F}}\|_{L^{p}(\Omega)}+\|f\|_{L^{\frac{p}{2}}(\Omega)}),
$$
where $C=C(n,\Lambda_{1}, \Lambda_{2}, p, |\Omega|).$
\end{thm}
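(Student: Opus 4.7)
The plan is to adapt the De~Giorgi--Stampacchia truncation method, which is the standard route to Theorem 8.16 in [DT]. Set $k_0 := \|u\|_{L^\infty(\partial\Omega)}$. Running the same argument on $u$ and on $-u$ (both satisfy equations of the same structural form, with $f, \overrightarrow{\mathbf{F}}$ replaced by their negatives), it suffices to prove
\[
\sup_{\Omega} u \leq k_0 + C\bigl(\|\overrightarrow{\mathbf{F}}\|_{L^p(\Omega)} + \|f\|_{L^{p/2}(\Omega)}\bigr).
\]
For each $k \geq k_0$ the truncation $w_k := (u-k)_+$ lies in $W^{1,2}_0(\Omega)$ because $u \leq k_0 \leq k$ on $\partial\Omega$ in the trace sense, so $w_k$ is an admissible test function for $(\ref{2})$.

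Testing $(\ref{2})$ against $\phi = w_k$ and invoking $(\ref{aij})$,
\[
\Lambda_1 \int_{A(k)} |Du|^2\,dx \leq \int_{A(k)} |b_i||Du|\,w_k\,dx + \int_{A(k)} f\,w_k\,dx + \int_{A(k)} \overrightarrow{\mathbf{F}} \cdot Dw_k\,dx,
\]
where $A(k) := \{u>k\} \cap \Omega$. I would estimate each right-hand term by H\"older combined with the Sobolev inequality $\|w_k\|_{L^{2^*}(\Omega)} \leq C\|Dw_k\|_{L^2(\Omega)}$, where $2^* = 2n/(n-2)$ (with a minor modification when $n=2$). Since $p>n$ and $q>n$, each H\"older exponent on $|A(k)|$ that appears is strictly positive; absorbing one copy of $\|Dw_k\|_{L^2}$ into the left side produces
\[
\|Dw_k\|_{L^2(\Omega)}^2 \leq C\bigl(\|f\|_{L^{p/2}}^2 + \|\overrightarrow{\mathbf{F}}\|_{L^p}^2\bigr)\,|A(k)|^{1+2\delta}
\]
for some $\delta>0$ depending only on $n,p,q$.

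Combining this with the elementary bound $(h-k)^2|A(h)| \leq \|w_k\|_{L^2(\Omega)}^2 \leq C|A(k)|^{2/n}\|Dw_k\|_{L^2(\Omega)}^2$ (Sobolev once more), the level-set function $\phi(k) := |A(k)|$ satisfies
\[
\phi(h) \leq \frac{K}{(h-k)^2}\,\phi(k)^{1+\eta}, \qquad h>k\geq k_0,
\]
with $\eta>0$ and $K$ controlled by $\|\overrightarrow{\mathbf{F}}\|_{L^p}$, $\|f\|_{L^{p/2}}$ and the structural constants $n, \Lambda_1, \Lambda_2, p, |\Omega|$. Stampacchia's iteration lemma then forces $\phi(k_0+d) = 0$ for some explicit $d \leq C(\|\overrightarrow{\mathbf{F}}\|_{L^p} + \|f\|_{L^{p/2}})$, giving the desired estimate.

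The main obstacle is the drift term $b_i D_i u$: because $b_i$ is only assumed to lie in $L^q$ rather than $L^\infty$, one has to track exponents carefully to ensure that the H\"older power on $|A(k)|$ coming from this term is strictly positive, so that it can be absorbed on the left for $k$ large. The hypothesis $q>n$ is precisely what delivers this, via the split
\[
\int_{A(k)} |b_i||Du|\,w_k\,dx \leq \|b_i\|_{L^q}\,\|Du\|_{L^2(A(k))}\,\|w_k\|_{L^{2^*}}\,|A(k)|^{1/n - 1/q},
\]
whose last factor has a positive exponent exactly when $q>n$. Everything else in the argument is routine once this absorption step is secured.
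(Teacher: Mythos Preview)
The paper does not supply its own proof of this theorem: it is simply quoted as Theorem~8.16 of Gilbarg--Trudinger \cite{DT} (together with Theorems~8.24 and~8.29 for the H\"older estimates that follow). Your sketch is precisely the De~Giorgi--Stampacchia truncation argument that underlies that reference, so there is no substantive difference to discuss; your outline is correct and matches what the cited source does.

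One small point worth being careful about when you write it out in full: the absorption of the drift contribution
\[
\|b_i\|_{L^q}\,\|Dw_k\|_{L^2(A(k))}\,\|w_k\|_{L^{2^*}}\,|A(k)|^{1/n-1/q}
\]
into the left side requires $|A(k)|$ to be below a threshold depending on $\Lambda_1,\Lambda_2,n,q$. You note this (``for $k$ large''), but make sure your write-up explains why the threshold can be reached at a level $k$ that exceeds $k_0$ only by an amount controlled by the structural constants and $|\Omega|$ (as in \cite{DT}), so that the final bound depends only on the stated data.
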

\begin{thm}[Interior H$\ddot{o}$lder estimates]\label{lin}
Let $a_{ij}, b_{i}$ satisfy conditions $(\ref{aij})$ and $(\ref{b})$, let $\overrightarrow{\mathbf{F}}\in L^{p}(\Omega)$, $f\in L^{\frac{p}{2}}(\Omega)$ for some $p>n$. Then if $u\in W^{1,2}(\Omega)$ is a solution of $(\ref{2})$, we have for any $\Omega'\Subset\Omega$ the estimate
$$\|u\|_{C^{\alpha}(\overline{\Omega'})}\leq C(\|u\|_{L^{\infty}(\Omega)}+\|\overrightarrow{\mathbf{F}}\|_{L^{p}(\Omega)}
+\|f\|_{L^{\frac{p}{2}}(\Omega)}),
$$
where $C=C(n,\Lambda_{1},\Lambda_{2},\Omega,p,d')$, $d'=dist(\Omega', \partial\Omega)$, $\alpha=\alpha(n,\Lambda_{1},\Lambda_{2},\Omega,p,d')$.
\end{thm}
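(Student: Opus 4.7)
The plan is to follow the classical route of oscillation decay combined with a Morrey-type iteration; this is essentially Theorem 8.24 of Gilbarg--Trudinger, whose hypotheses are matched by the ones stated here. Fix $y \in \Omega'$ and choose $R_0 \le d'/4$ so that $B_{4R_0}(y) \Subset \Omega$. For $R \in (0, R_0]$, denote $M_R = \sup_{B_R(y)} u$, $m_R = \inf_{B_R(y)} u$, and $\Theta(R) = M_R - m_R$. The goal is to show that $\Theta(R) \le C (R/R_0)^{\alpha} (\|u\|_{L^\infty(\Omega)} + \|\overrightarrow{\mathbf{F}}\|_{L^p(\Omega)} + \|f\|_{L^{p/2}(\Omega)})$ for some $\alpha > 0$ uniform in $y$, which yields the stated $C^{\alpha}$ bound after a standard conversion from pointwise oscillation decay on balls to a Hölder seminorm on $\overline{\Omega'}$.

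The key step is the weak Harnack inequality applied to the nonnegative functions $v_1 = M_{4R} - u$ and $v_2 = u - m_{4R}$ in $B_{4R}(y)$; each is a $W^{1,2}$ supersolution of a divergence equation with the same leading coefficients $a_{ij}$ and drift $b_i$, as is immediate from testing the weak formulation of $(\ref{2})$ against nonnegative $\phi \in W^{1,2}_0$. The assumptions $q > n$ on $b_i$ together with $p > n$ on $\overrightarrow{\mathbf{F}}$ and $f$ are exactly what permit invoking the inequality (Theorem 8.18 of Gilbarg--Trudinger), yielding
$$\Bigl(\frac{1}{|B_{2R}|}\int_{B_{2R}} v_i^{p_0}\,dx\Bigr)^{1/p_0} \le C \Bigl(\inf_{B_R} v_i + k(R)\Bigr), \qquad i = 1, 2,$$
for some $p_0 > 0$ and a tail bounded by $k(R) \le C R^{1-n/p}(\|\overrightarrow{\mathbf{F}}\|_{L^p(\Omega)} + R\|f\|_{L^{p/2}(\Omega)})$. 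Adding the two inequalities and using that $v_1 + v_2 \equiv \Theta(4R)$ while $\inf_{B_R} v_1 + \inf_{B_R} v_2 = \Theta(4R) - \Theta(R)$, one arrives at the oscillation decay
$$\Theta(R) \le \gamma\, \Theta(4R) + C' R^{\beta}\bigl(\|\overrightarrow{\mathbf{F}}\|_{L^p(\Omega)} + \|f\|_{L^{p/2}(\Omega)}\bigr),$$
with some $\gamma \in (0,1)$ depending only on $n, \Lambda_1, \Lambda_2$, and with $\beta = 1 - n/p > 0$.

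A standard dyadic iteration lemma (Lemma 8.23 of Gilbarg--Trudinger) then converts this into $\Theta(R) \le C R^{\alpha}(\|u\|_{L^\infty(\Omega)} + \|\overrightarrow{\mathbf{F}}\|_{L^p(\Omega)} + \|f\|_{L^{p/2}(\Omega)})$ for all $R \le R_0$, where $\alpha = \min\{\beta,\, \log(1/\gamma)/\log 4\} > 0$. Uniformity of the estimate in $y \in \Omega'$ (with $R_0$ controlled by $d'$) gives the desired $C^{\alpha}(\overline{\Omega'})$ bound. The main obstacle is verifying the weak Harnack inequality in the correctly scaled form so that the tail $k(R)$ vanishes like a positive power of $R$; the condition $p > n$ is precisely what forces $\beta > 0$, and is essential for the iteration to close. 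Once this scaled inequality is in hand, whether invoked from Gilbarg--Trudinger or rederived through Moser iteration combined with a Poincaré--Sobolev estimate adapted to the drift term, the remainder of the argument is routine bookkeeping.
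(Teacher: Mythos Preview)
Your proof is correct and is in fact a faithful sketch of the argument in Gilbarg--Trudinger, Theorem~8.24. The paper does not supply its own proof of this statement: it is quoted as a preliminary tool and attributed directly to \cite{DT} (Theorems~8.16, 8.24, 8.29), so your approach coincides with the source the paper cites.
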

\begin{thm}[Global H$\ddot{o}$lder estimates]\label{lemma2} Let $\Omega$ is a bounded domain in $\mathbb{R}^{n}$ and $T$ is a boundary portion. Assume that there exists $c_{0}>0$ such that for arbitrary $x_{0}\in T$,
$$\liminf_{r\rightarrow0}\frac{|B_{r}(x_{0})\cap\Omega^{c}|}{|B_{r}(x_{0})|}=c_{0}.
$$
Let $a_{ij}, b_{i}$ satisfy conditions $(\ref{aij})$ and $(\ref{b})$, let $\overrightarrow{\mathbf{F}}\in L^{p}(\Omega)$, $f\in L^{\frac{p}{2}}(\Omega)$ for some $p>n$. Then if $u\in W^{1,2}(\Omega)$ satisfies equation $(\ref{2})$ in $\Omega$ and there exist constants $K,\alpha_{0}>0$ such that
$$\underset{B_{R}(x_{0})\cap\partial\Omega}{osc}u\leq KR^{\alpha_{0}},\quad \forall x_{0}\in T,~R>0,
$$
it follows that $u\in C^{\alpha}(\Omega\cup T)$ for some $\alpha>0$ and for any $\Omega'\Subset\Omega \cup T$,
\begin{eqnarray*}
\|u\|_{C^{\alpha}(\Omega')}\leq C(\|u\|_{L^{\infty}(\Omega)}+\|\overrightarrow{\mathbf{F}}\|_{L^{p}(\Omega)}
+\|f\|_{L^{\frac{p}{2}}(\Omega)}+K),
\end{eqnarray*}
where $\alpha=\alpha(n,\Lambda_{1},\Lambda_{2},\Omega,p,\alpha_{0},d')$, $C=C(n,\Lambda_{1},\Lambda_{2},\Omega,p,\alpha_{0},d')$, $d'=dist(\Omega', \partial\Omega-T)$.
\end{thm}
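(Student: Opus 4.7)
The plan is to prove Theorem \ref{lemma2} by reducing to a pointwise oscillation decay estimate at each boundary point $x_0 \in T$, and then combining that decay with the interior H\"older estimate of Theorem \ref{lin} to obtain global H\"older continuity on any $\Omega' \Subset \Omega \cup T$. The engine is the weak Harnack inequality for nonnegative supersolutions of divergence-form equations (Theorem 8.18 in \cite{DT}), coupled with the measure-density hypothesis on $T$.

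Fix $x_0 \in T$ and, for small $r>0$, set $M(r) := \sup_{B_r(x_0)\cap\Omega} u$, $m(r) := \inf_{B_r(x_0)\cap\Omega} u$, and let $M^{*}(r),~m^{*}(r)$ denote the corresponding extrema over $B_r(x_0) \cap \partial\Omega$. Write $\omega(r) := M(r)-m(r)$. Consider $w_1 := M(2r) - u$, which is a nonnegative weak supersolution of an equation of the same divergence form (with right-hand side $-f + \operatorname{div}\overrightarrow{\mathbf{F}}$) in $B_{2r}(x_0)\cap\Omega$. On $\partial\Omega \cap B_{2r}(x_0)$ we have $w_1 \geq M(2r) - M^{*}(2r)$, so extending $w_1$ by the constant $M(2r)-M^{*}(2r)$ on $B_{2r}(x_0)\setminus\Omega$ yields a nonnegative supersolution on all of $B_{2r}(x_0)$. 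The weak Harnack inequality then gives, for some $q>0$,
\begin{equation*}
\Big(\frac{1}{|B_{2r}|}\int_{B_{2r}(x_0)} w_1^{q}\,dx\Big)^{1/q} \leq C\Big(\inf_{B_r(x_0)\cap\Omega} w_1 + \tau(r)\Big),
\end{equation*}
where $\tau(r) := r^{1-n/p}\|\overrightarrow{\mathbf{F}}\|_{L^{p}(\Omega)} + r^{2-2n/p}\|f\|_{L^{p/2}(\Omega)}$. Using the measure-density hypothesis $|B_{2r}(x_0)\cap\Omega^{c}| \geq (c_0/2)|B_{2r}|$ for $r$ small, the left side is bounded below by a multiple of $M(2r)-M^{*}(2r)$, yielding
\begin{equation*}
M(2r) - M^{*}(2r) \leq C\bigl(M(2r)-M(r)+\tau(r)\bigr).
\end{equation*}
An entirely symmetric argument applied to $w_2 := u - m(2r)$ gives $M^{*}(2r)-m(2r) \leq C\bigl(m(r)-m(2r)+\tau(r)\bigr)$.

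Adding the two inequalities, using the boundary oscillation bound $M^{*}(2r)-m^{*}(2r) \leq 2K(2r)^{\alpha_0}$, and rearranging, I obtain
\begin{equation*}
\omega(r) \leq \theta\, \omega(2r) + C\bigl(r^{\alpha_0} + \tau(r)\bigr), \qquad \theta = \tfrac{C-1}{C} \in (0,1).
\end{equation*}
Since $p>n$, both $r^{\alpha_0}$ and $\tau(r)$ are bounded by $C r^{\beta}$ for some $\beta>0$. A standard iteration lemma (e.g.\ Lemma 8.23 in \cite{DT}) then produces an exponent $\alpha>0$ and a constant such that $\omega(r) \leq C r^{\alpha}\bigl(\|u\|_{L^{\infty}(\Omega)} + \|\overrightarrow{\mathbf{F}}\|_{L^{p}(\Omega)} + \|f\|_{L^{p/2}(\Omega)} + K\bigr)$ uniformly for $x_0 \in T$ and $0<r\leq r_1$.

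To assemble the global estimate on $\Omega'$, take $x,y \in \Omega'$ and set $d(z) := \operatorname{dist}(z, \partial\Omega \setminus T)$. If both $x,y$ stay at distance $\geq \delta$ from $T$ (where $\delta$ depends on $d' = \operatorname{dist}(\Omega',\partial\Omega\setminus T)$), the interior H\"older estimate of Theorem \ref{lin} applies on a ball well inside $\Omega$. Otherwise, one of the points, say $x$, lies within a controlled distance of some $x_0\in T$, and pairing the boundary oscillation decay $|u(x)-u(x_0)| \leq C|x-x_0|^{\alpha}$ with an interior bound from $x$ to $y$ (via a chain of balls of radii comparable to $d(\cdot)$) yields $|u(x)-u(y)| \leq C|x-y|^{\alpha}$ with the desired constant dependence. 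The main obstacle is the justification that $w_1$, extended by a constant across $\partial\Omega$, qualifies as a weak supersolution in the distributional sense required for the weak Harnack inequality, and tracking that the lower-order terms $b_i D_i u$ and the $\overrightarrow{\mathbf{F}}$, $f$ contributions are absorbed into the tail $\tau(r)$ with constants depending only on the stated parameters.
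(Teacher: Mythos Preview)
The paper does not supply its own proof of this theorem: it is quoted verbatim as one of the ``main tools'' and attributed to Theorem~8.29 in Gilbarg--Trudinger~\cite{DT}, alongside Theorems~8.16 and~8.24. Your sketch is precisely the standard Gilbarg--Trudinger argument that the citation points to---weak Harnack inequality (Theorem~8.18) applied to $M(2r)-u$ and $u-m(2r)$ extended across $\partial\Omega$ using the exterior measure-density condition, followed by the oscillation-iteration Lemma~8.23---so your approach matches the reference the paper invokes.

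One small slip: in the symmetric step for $w_2=u-m(2r)$ you wrote $M^{*}(2r)-m(2r)$ on the left; it should be $m^{*}(2r)-m(2r)$, since the extended value of $w_2$ on $B_{2r}\setminus\Omega$ is $m^{*}(2r)-m(2r)$. With that correction the two inequalities add to give $\omega(2r)-\bigl(M^{*}(2r)-m^{*}(2r)\bigr)\leq C\bigl(\omega(2r)-\omega(r)\bigr)+2C\tau(r)$, and the boundary-oscillation hypothesis then yields the decay $\omega(r)\leq\theta\,\omega(2r)+C(r^{\alpha_0}+\tau(r))$ as you state. Your closing remark about the extension of $w_1$ being a genuine supersolution is exactly the point handled in~\cite{DT} via the trace characterization of $W^{1,2}$ boundary inequalities (cf.\ the discussion preceding Theorem~8.27 there).
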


Then we give an approximation lemma.
\begin{lm}\label{fs}
Let $\Omega$ be a bounded domain satisfying that there exists $c_{0}>0$ and such that for arbitrary $x_{0}\in B_{1}\cap\partial\Omega$,
$$\liminf_{r\rightarrow0}\frac{|B_{r}(x_{0})\cap\Omega^{c}|}{|B_{r}(x_{0})|}=c_{0}.
$$
 Assume that $0\in\partial\Omega$  and $u$ satisfies weakly
\begin{eqnarray*}
\left\{
\begin{array}{rcll}
-D_{j}(a_{ij}D_{i}u)+b_{i}D_{i}u&=&f-\text{div}\overrightarrow{\mathbf{F}}\qquad&\text{in}~~B_{1}\cap\Omega,\\
u&=&g\qquad&\text{on}~~B_{1}\cap\partial\Omega, \\
\end{array}
\right.
\end{eqnarray*}
with $\overrightarrow{\mathbf{F}}(x)\in L^{\infty}(\Omega),$ $f\in L^{q}(\Omega)$, $g\in L^{\infty}(\partial\Omega)$. If $a_{ij}$ and $b_{i}$ satisfy $(\ref{aij}),~(\ref{b})$, and
$$\|a_{ij}-\delta_{ij}\|_{L^{\infty}(B_{1}\cap\Omega)}\leq \varepsilon_{1}, \quad \|b_{i}\|_{L^{q}(B_{1}\cap\Omega)}\leq \varepsilon_{2},
$$
for some $\varepsilon_{1},~\varepsilon_{2}>0$ small enough, and
$$B_{1}\cap\{x\in\mathbb{R}^{n}: x_{n}>\varepsilon\}\subset B_{1}\cap\Omega\subset B_{1}\cap\{x\in\mathbb{R}^{n}: x_{n}>-\varepsilon\}$$
for some $0<\varepsilon<\frac{1}{16}$, then, for some $\alpha>0$, there exists a universal constant $C_{0}$ and a harmonic function $h$ defined in $B_{\frac{1}{16}}$ which is odd with respect to $x_{n}$ satisfying
$$\|h\|_{L^{\infty}(B_{\frac{1}{16}})}\leq(1+2C_{0}\varepsilon)\|u\|_{L^{\infty}(B_{1}\cap\Omega)}$$
such that
\begin{eqnarray*}
\left\|u-h\right\|_{L^{\infty}(B_{\frac{1}{16}}\cap\Omega)}
&\leq&C\|g\|_{L^{\infty}(B_{1}\cap\partial\Omega)}+C(\|\overrightarrow{\mathbf{F}}\|_{L^{2q}(B_{1}\cap\Omega)}+\|f\|_{L^{q}(B_{1}\cap\Omega)})\\
&&+C(\varepsilon^{\alpha}+(\varepsilon_{1}+\varepsilon_{2})^{\frac{\alpha}{6}}+\sqrt{\varepsilon_{1}+\varepsilon_{2}})\|u\|_{L^{\infty}(B_{1}\cap\Omega)}.
\end{eqnarray*}
where $\alpha=\alpha(n,q,\Lambda_{1},\Lambda_{2},\Omega)>0$, $C=C(n,q,\Lambda_{1},\Lambda_{2},\Omega)$.
\end{lm}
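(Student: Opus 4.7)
The strategy is a double approximation: first replace $u$ by the solution $v$ of the constant-coefficient Poisson equation with the same inhomogeneity and the same boundary values on $\partial(B_{1/2}\cap\Omega)$, absorbing the coefficient perturbation $a_{ij}-\delta_{ij}$ and the drift $b_i$ into a quantitatively small error; then replace $v$ by the harmonic function $h$ obtained by solving Laplace in the clean half-ball $B_{1/2}\cap\{x_n>0\}$ (with zero data on $T_{1/2}$ and the boundary values of $v$ on the curved part) and extending by odd reflection across $\{x_n=0\}$. The oddness of $h$ is natural because $u=g$ is of size $\|g\|_{L^\infty}+\varepsilon^\alpha\|u\|_{L^\infty}$ on the approximately flat portion of $\partial\Omega$ by the boundary Hölder estimate of Theorem \ref{lemma2}, so the zero-trace ``ideal'' problem is the right limit geometry.

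For Step 1, I would first extract a Caccioppoli-type energy estimate for $u$ on $B_{3/4}\cap\Omega$, yielding $\|Du\|_{L^2}\le C(\|u\|_{L^\infty}+\|\vec{F}\|_{L^{2q}}+\|f\|_{L^q}+\|g\|_{L^\infty})$, and then upgrade this to $\|Du\|_{L^{2+\delta}}$ by a Gehring/Meyers-type self-improvement (the boundary density condition on $\Omega^c$ together with the near-flatness suffice to obtain reverse Hölder inequalities at the boundary). With $v$ defined as above, the difference $w=u-v$ solves $-\Delta w=D_j((\delta_{ij}-a_{ij})D_i u)-b_iD_i u$ in $B_{1/2}\cap\Omega$ with $w=0$ on the boundary. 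Applying Theorem \ref{lemma1} to $w$, combined with H\"older's inequality and the higher integrability of $Du$, produces an $L^\infty$ bound of the form $\|w\|_{L^\infty}\le C(\varepsilon_1+\varepsilon_2)^{\alpha/6}\|u\|_{L^\infty}+\sqrt{\varepsilon_1+\varepsilon_2}\,\|u\|_{L^\infty}+\text{(lower order)}$, where the fractional exponent arises from interpolating the $L^p$-norm of $(\delta_{ij}-a_{ij})D_i u$ between $\|a-I\|_{L^\infty}\|Du\|_{L^2}$ and $\|a-I\|_{L^\infty}^\theta\|Du\|_{L^{2+\delta}}^{1-\theta}$.

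For Steps 3 and 4, I would split $v=v_1+v_2$ with $v_1$ solving the Poisson equation with zero boundary values (so $\|v_1\|_{L^\infty}\le C(\|f\|_{L^q}+\|\vec{F}\|_{L^{2q}})$ by Theorem \ref{lemma1}) and $v_2$ harmonic in $B_{1/2}\cap\Omega$ with $v_2=u$ on the boundary. I then define $h$ on $B_{1/2}\cap\{x_n>0\}$ by solving Laplace with boundary values equal to $v_2$ on the spherical cap and zero on $T_{1/2}$, and extend by odd reflection. To compare $h$ with $v_2$ on $B_{1/16}\cap\Omega$ I would apply the maximum principle on $B_{1/2}\cap\Omega\cap\{x_n>0\}$: on the common part of the two boundaries the two functions agree, and on the thin strip $\{|x_n|<\varepsilon\}$ where the domains differ, Theorem \ref{lemma2} gives $|u|\le C\varepsilon^\alpha\|u\|_{L^\infty}+C\|g\|_{L^\infty}+C(\|\vec{F}\|+\|f\|)$, hence the discrepancy is $O(\varepsilon^\alpha)$. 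The $L^\infty$ bound $(1+2C_0\varepsilon)\|u\|_{L^\infty}$ on $h$ then comes from the same comparison, and the triangle inequality $\|u-h\|\le\|u-v\|+\|v_1\|+\|v_2-h\|$ assembles the three error budgets into the stated bound.

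The main obstacle I expect is the quantitative Step 2: the divergence forcing $D_j((\delta_{ij}-a_{ij})D_i u)$ is only $L^2$ from the basic Caccioppoli bound, which is strictly below the $L^p$ with $p>n$ required by Theorem \ref{lemma1}, so one must genuinely gain the higher integrability of $Du$ at the boundary and then interpolate carefully. Tracking the exponents through this interpolation—so that the final error depends on a concrete fractional power of $\varepsilon_1+\varepsilon_2$ rather than only qualitatively tending to zero—together with keeping the geometric scale $\varepsilon$ and the coefficient scales $\varepsilon_1,\varepsilon_2$ separated in the bookkeeping, is the delicate technical core of the lemma.
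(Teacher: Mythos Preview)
Your plan has a genuine gap in Step~2, and it is precisely the one you flag as ``the main obstacle''---but the resolution you propose does not work. You write the difference $w=u-v$ with respect to the Laplacian, so the forcing becomes $D_j((\delta_{ij}-a_{ij})D_iu)-b_iD_iu$, and then you want to apply Theorem~\ref{lemma1}. That theorem needs the divergence-form source in $L^p$ for some $p>n$. A Gehring--Meyers argument only upgrades $Du$ from $L^2$ to $L^{2+\delta}$ with a small $\delta>0$ depending on ellipticity; in dimension $n\ge 3$ this is nowhere near $L^p$ with $p>n$, and no amount of interpolation against $\|a-I\|_{L^\infty}$ will manufacture integrability of $Du$ that is not there. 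So as written, the comparison step cannot be closed.

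The paper avoids this entirely by reversing the roles. It first builds a \emph{pure harmonic} function $w$ on the enlarged half-ball $B_{1/4}\cap\{x_n>-\varepsilon\}$ (no inhomogeneity at all, boundary values taken from $u$ on the spherical part and zero on the flat part), and then writes the equation for $u-w$ with respect to the \emph{original} variable-coefficient operator:
\[
-D_j\bigl(a_{ij}D_i(u-w)\bigr)+b_iD_i(u-w)=f-\operatorname{div}\vec{\mathbf F}+D_j\bigl((a_{ij}-\delta_{ij})D_iw\bigr)-b_iD_iw.
\]
Now the gradient appearing in the perturbative forcing is $Dw$, not $Du$. Since $w$ is harmonic, the interior gradient estimate gives $\|Dw\|_{L^\infty(B_{1/4-\delta})}\le C\delta^{-1}\|u\|_{L^\infty}$, so $(a_{ij}-\delta_{ij})D_iw$ and $b_iD_iw$ lie in $L^\infty$ (hence in $L^{2q}$, $L^q$), and Theorem~\ref{lemma1} applies on $B_{1/4-\delta}\cap\Omega$. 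Choosing $\delta=\sqrt{\varepsilon_1+\varepsilon_2}$ balances $(\varepsilon_1+\varepsilon_2)/\delta$ to $\sqrt{\varepsilon_1+\varepsilon_2}$. The additional term $(\varepsilon_1+\varepsilon_2)^{\alpha/6}$ does not come from interpolating $Du$; it arises when estimating $|u-w|$ on the shrunk spherical boundary $\partial B_{1/4-\delta}\cap\Omega$, where one uses the $C^\alpha$ modulus of $u$ and a barrier bound on the $C^\alpha$ modulus of $w$ on balls of radius $\delta^{1/3}$. Your Step~3--4 (splitting into a zero-boundary Poisson piece $v_1$ and a harmonic piece $v_2$, then flattening) is reasonable in spirit, but note that in the paper the passage to the odd harmonic $h$ in $B_{1/16}$ is done from the already-harmonic $w$ via explicit barriers based on the fundamental solution, which also yields the bound $\|h\|_{L^\infty}\le(1+2C_0\varepsilon)\|u\|_{L^\infty}$.
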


\begin{proof}
We divide the proof into Four steps.

\
\

{\bf Step 1:} Prove $
|u(x)|\leq C^{*}(x_{n}+\varepsilon)^{\alpha}+3\|g\|_{L^{\infty}(B_{1}\cap\partial\Omega)}~ \text{in}~~\overline{B_{\frac{1}{2}}\cap\Omega}
$ for some $C^*$.

\
\

We first consider the interior H$\ddot{o}$lder estimates of $u$. Since $B_{1}\cap\{x\in\mathbb{R}^{n}: x_{n}>\varepsilon\}\subset B_{1}\cap\Omega\subset B_{1}\cap\{x\in\mathbb{R}^{n}: x_{n}>-\varepsilon\}$, then $B_{\frac{1}{2}}\cap\{x\in\mathbb{R}^{n}: x_{n}>\varepsilon\}\Subset B_{1}\cap\Omega$. Then by Theorem $\ref{lin}$, there exists $0<\beta<1$ such that
\begin{equation}\label{ii}
\|u\|_{C^{\beta}(\overline{B_{\frac{1}{2}}\cap\{x_{n}>\varepsilon\}})}\leq C(\|u\|_{L^{\infty}(B_{1}\cap\Omega)}+\|\overrightarrow{\mathbf{F}}\|_{L^{2q}(B_{1}\cap\Omega)}+\|f\|_{L^{q}(B_{1}\cap\Omega)}).
\end{equation}

Next we consider the H$\ddot{o}$lder estimates of $u$ up to the boundary. Let $u_{1}$ and $u_{2}$ solve the following equations.
\begin{eqnarray*}
\left\{
\begin{array}{rcll}
-D_{j}(a_{ij}D_{i}u_{1})+b_{i}D_{i}u_{1}&=&f-\text{div}\overrightarrow{\mathbf{F}}\qquad&\text{in}~~B_{1}\cap\Omega,\\
u_{1}&=&\sup\limits_{B_{1}\cap\partial\Omega}g\qquad&\text{on}~~B_{1}\cap\partial\Omega, \\
u_{1}&=&u\qquad&\text{on}~~\partial B_{1}\cap\Omega,
\end{array}
\right.
\end{eqnarray*}
\begin{eqnarray*}
\left\{
\begin{array}{rcll}
-D_{j}(a_{ij}D_{i}u_{2})+b_{i}D_{i}u_{2}&=&f-\text{div}\overrightarrow{\mathbf{F}}\qquad&\text{in}~~B_{1}\cap\Omega,\\
u_{2}&=&\inf\limits_{B_{1}\cap\partial\Omega}g\qquad&\text{on}~~B_{1}\cap\partial\Omega, \\
u_{2}&=&u\qquad&\text{on}~~\partial B_{1}\cap\Omega.
\end{array}
\right.
\end{eqnarray*}
Then $u_{1}-u$ and $u_{2}-u$ satisfy
\begin{eqnarray*}
\left\{
\begin{array}{rcll}
-D_{j}(a_{ij}D_{i}(u_{1}-u))+b_{i}D_{i}(u_{1}-u)&=&0\qquad&\text{in}~~B_{1}\cap\Omega,\\
u_{1}-u&\geq&0\qquad&\text{on}~~B_{1}\cap\partial\Omega, \\
u_{1}-u&=&0\qquad&\text{on}~~\partial B_{1}\cap\Omega,
\end{array}
\right.
\end{eqnarray*}
\begin{eqnarray*}
\left\{
\begin{array}{rcll}
-D_{j}(a_{ij}D_{i}(u_{2}-u))+b_{i}D_{i}(u_{2}-u)&=&0\qquad&\text{in}~~B_{1}\cap\Omega,\\
u_{2}-u&\leq&0\qquad&\text{on}~~B_{1}\cap\partial\Omega, \\
u_{2}-u&=&0\qquad&\text{on}~~\partial B_{1}\cap\Omega.
\end{array}
\right.
\end{eqnarray*}
By maximum principle, it follows that
$$u_{2}\leq u\leq u_{1}\quad \text{in}~~B_{1}\cap\Omega.
$$
Under the assumptions of Lemma $\ref{fs}$, using global H$\ddot{o}$lder estimates of Theorem $\ref{lemma2}$ to $u_{1}$ and $u_{2}$, we can find $0<\alpha_{1},\alpha_{2}<1$ such that
\begin{eqnarray*}
\|u_{1}\|_{C^{\alpha_{1}}(\overline{B_{\frac{1}{2}}\cap\Omega})}\leq C(\|u_{1}\|_{L^{\infty}(B_{1}\cap\Omega)}+\|\overrightarrow{\mathbf{F}}\|_{L^{2q}(B_{1}\cap\Omega)}+\|f\|_{L^{q}(B_{1}\cap\Omega)}),
\end{eqnarray*}
\begin{eqnarray*}
\|u_{2}\|_{C^{\alpha_{2}}(\overline{B_{\frac{1}{2}}\cap\Omega})}\leq C(\|u_{2}\|_{L^{\infty}(B_{1}\cap\Omega)}+\|\overrightarrow{\mathbf{F}}\|_{L^{2q}(B_{1}\cap\Omega)}+\|f\|_{L^{q}(B_{1}\cap\Omega)}).
\end{eqnarray*}
By Theorem $\ref{lemma1}$, we can estimate $\|u_{1}\|_{L^{\infty}(B_{1}\cap\Omega)}$ and $\|u_{2}\|_{L^{\infty}(B_{1}\cap\Omega)}$ to get
$$\|u_{1}\|_{L^{\infty}(B_{1}\cap\Omega)}\leq C(\|u\|_{L^{\infty}(B_{1}\cap\Omega)}+\|g\|_{L^{\infty}(B_{1}\cap\partial\Omega)}
+\|\overrightarrow{\mathbf{F}}\|_{L^{2q}(B_{1}\cap\Omega)}+\|f\|_{L^{q}(B_{1}\cap\Omega)}),
$$
$$\|u_{2}\|_{L^{\infty}(B_{1}\cap\Omega)}\leq C(\|u\|_{L^{\infty}(B_{1}\cap\Omega)}+\|g\|_{L^{\infty}(B_{1}\cap\partial\Omega)}
+\|\overrightarrow{\mathbf{F}}\|_{L^{2q}(B_{1}\cap\Omega)}+\|f\|_{L^{q}(B_{1}\cap\Omega)}).
$$
Putting the above estimates together, we have
\begin{equation}\label{alpha1}
\|u_{1}\|_{C^{\alpha_{1}}(\overline{B_{\frac{1}{2}}\cap\Omega})}\leq C(\|u\|_{L^{\infty}(B_{1}\cap\Omega)}+\|g\|_{L^{\infty}(B_{1}\cap\partial\Omega)}
+\|\overrightarrow{\mathbf{F}}\|_{L^{2q}(B_{1}\cap\Omega)}+\|f\|_{L^{q}(B_{1}\cap\Omega)}),
\end{equation}
\begin{equation}\label{alpha1}
\|u_{2}\|_{C^{\alpha_{2}}(\overline{B_{\frac{1}{2}}\cap\Omega})}\leq C(\|u\|_{L^{\infty}(B_{1}\cap\Omega)}+\|g\|_{L^{\infty}(B_{1}\cap\partial\Omega)}
+\|\overrightarrow{\mathbf{F}}\|_{L^{2q}(B_{1}\cap\Omega)}+\|f\|_{L^{q}(B_{1}\cap\Omega)}).
\end{equation}

We denote $C^{*}=C(\|u\|_{L^{\infty}(B_{1}\cap\Omega)}+\|\overrightarrow{\mathbf{F}}\|_{L^{2q}(B_{1}\cap\Omega)}+\|f\|_{L^{q}(B_{1}\cap\Omega)}
+\|g\|_{L^{\infty}(B_{1}\cap\partial\Omega)})$ and set $\alpha=\min\{\alpha_{1},\alpha_{2},\beta\}$.
Since $B_{1}\cap\{x\in\mathbb{R}^{n}: x_{n}>\varepsilon\}\subset B_{1}\cap\Omega\subset B_{1}\cap\{x\in\mathbb{R}^{n}: x_{n}>-\varepsilon\}$, then for any $x=(x_{0}',x_{n})\in \overline{B_{\frac{1}{2}}\cap\Omega}$, $x_{0}=(x_{0}',x_{0,n})\in B_{\frac{1}{2}}\cap\partial\Omega$, we have
$$\frac{|u_{1}(x)-u_{1}(x_{0})|}{(x_{n}+\varepsilon)^{\alpha}}=
\frac{\left|u_{1}(x)-\sup\limits_{B_{1}\cap\partial\Omega}g\right|}{(x_{n}+\varepsilon)^{\alpha}}
\leq C^{*},
$$
$$\frac{|u_{2}(x)-u_{2}(x_{0})|}{(x_{n}+\varepsilon)^{\alpha}}=
\frac{\left|u_{2}(x)-\inf\limits_{B_{1}\cap\partial\Omega}g\right|}{(x_{n}+\varepsilon)^{\alpha}}
\leq C^{*}.
$$
It follows that
\begin{eqnarray*}
\frac{u(x)-g(x_{0})}{(x_{n}+\varepsilon)^{\alpha}}&\leq&
\frac{u_{1}(x)-\inf\limits_{B_{1}\cap\partial\Omega}g}{(x_{n}+\varepsilon)^{\alpha}}\\
&\leq& \frac{\left|u_{1}(x)-\sup\limits_{B_{1}\cap\partial\Omega}g\right|}{(x_{n}+\varepsilon)^{\alpha}}+
\frac{\left|\sup\limits_{B_{1}\cap\partial\Omega}g-\inf\limits_{B_{1}\cap\partial\Omega}g\right|}{(x_{n}+\varepsilon)^{\alpha}}\\
&\leq& C^{*}+\frac{2\|g\|_{L^{\infty}(B_{1}\cap\partial\Omega)}}{(x_{n}+\varepsilon)^{\alpha}}.
\end{eqnarray*}
Then we have
$$u(x)\leq C^{*}(x_{n}+\varepsilon)^{\alpha}+3\|g\|_{L^{\infty}(B_{1}\cap\partial\Omega)}.
$$
Similarly, we can get
$$u(x)\geq -C^{*}(x_{n}+\varepsilon)^{\alpha}-3\|g\|_{L^{\infty}(B_{1}\cap\partial\Omega)}.
$$
Finally
\begin{equation}\label{uep}
|u(x)|\leq C^{*}(x_{n}+\varepsilon)^{\alpha}+3\|g\|_{L^{\infty}(B_{1}\cap\partial\Omega)}\quad \text{in}~~\overline{B_{\frac{1}{2}}\cap\Omega}.
\end{equation}

\
\

{\bf Step 2:}  Construct a harmonic function $w$ in $ B_{\frac{1}{4}}\cap\{x_{n}>-\varepsilon\}$.

\
\

We now let $w$ solve the following equation,
\begin{equation}\label{w}
\left\{
\begin{array}{rcll}
\Delta w&=&0\qquad&\text{in}~~B_{\frac{1}{4}}\cap\{x_{n}>-\varepsilon\},\\
w&=&u\qquad&\text{on}~~\partial B_{\frac{1}{4}}\cap\Omega, \\
w&=&0\qquad&\text{on}~~\partial B_{\frac{1}{4}}\cap\Omega^{c}\cap\{x_{n}>-\varepsilon\},\\
w&=&0\qquad&\text{on}~~B_{\frac{1}{4}}\cap\{x_{n}=-\varepsilon\}.
\end{array}
\right.
\end{equation}
For $w$ we have the following two results by using maximum principle and $(\ref{uep})$,
\begin{equation}\label{wmp}|w(x)|\leq \|u\|_{L^{\infty}(B_{1}\cap\Omega)}\quad\text{in}~~B_{\frac{1}{4}}\cap \{x_{n}>-\varepsilon\},
\end{equation}
$$|w(x)|=|u(x)|\leq C^{*}(x_{n}+\varepsilon)^{\alpha}+3\|g\|_{L^{\infty}(B_{1}\cap\partial\Omega)}\quad \text{on}~~\partial B_{\frac{1}{4}}\cap\Omega.
$$

By a simple calculation, we have
$$\Delta(C^{*}(x_{n}+\varepsilon)^{\alpha}+3\|g\|_{L^{\infty}(B_{1}\cap\partial\Omega)})\leq0.
$$
Then $w-(C^{*}(x_{n}+\varepsilon)^{\alpha}+3\|g\|_{L^{\infty}(B_{1}\cap\partial\Omega)})$ satisfies
\begin{eqnarray*}
\left\{
\begin{array}{rcll}
\Delta(w-(C^{*}(x_{n}+\varepsilon)^{\alpha}+3\|g\|_{L^{\infty}(B_{1}\cap\partial\Omega)}))&\geq&0\qquad&\text{in}~~B_{\frac{1}{4}}\cap\{x_{n}>-\varepsilon\},\\
w-(C^{*}(x_{n}+\varepsilon)^{\alpha}+3\|g\|_{L^{\infty}(B_{1}\cap\partial\Omega)})&\leq& 0\qquad&\text{on}~~\partial \{B_{\frac{1}{4}}\cap\{x_{n}>-\varepsilon\}\}.
\end{array}
\right.
\end{eqnarray*}
Then by maximum principle, we have
$$w(x)\leq C^{*}(x_{n}+\varepsilon)^{\alpha}+3\|g\|_{L^{\infty}(B_{1}\cap\partial\Omega)}
\quad\text{in}~~B_{\frac{1}{4}}\cap\{x_{n}\geq-\varepsilon\}.$$
By the same way, we deal with the function  $w+(C^{*}(x_{n}+\varepsilon)^{\alpha}+3\|g\|_{L^{\infty}(B_{1}\cap\partial\Omega)})$ and finally we get
\begin{equation}\label{wg}
|w(x)|\leq C^{*}(x_{n}+\varepsilon)^{\alpha}+3\|g\|_{L^{\infty}(B_{1}\cap\partial\Omega)}\quad\text{in}~~B_{\frac{1}{4}}\cap\{x_{n}\geq-\varepsilon\}.
\end{equation}
It follows that
\begin{equation}\label{w1}|w(x)|\leq C^{*}(2\varepsilon)^{\alpha}+3\|g\|_{L^{\infty}(B_{1}\cap\partial\Omega)}\quad\text{in}~~B_{\frac{1}{4}}\cap\{-\varepsilon \leq x_{n}\leq\varepsilon\}.
\end{equation}

On the other hand, for each fixed $x_{*}\in \partial B_{\frac{1}{4}}\cap\{x_{n}\geq\varepsilon+(\sqrt{\varepsilon_{1}+\varepsilon_{2}})^{\frac{1}{3}}\}$ and for $r\leq(\sqrt{\varepsilon_{1}+\varepsilon_{2}})^{\frac{1}{3}}$, we have $B_{r}(x_{*})\cap B_{\frac{1}{4}}\subset B_{\frac{1}{4}}\cap\{x_{n}\geq\varepsilon\}\subset B_{\frac{1}{2}}\cap\{x_{n}\geq\varepsilon\}$. Since $u$ is H$\ddot{o}$lder continuous in $B_{\frac{1}{2}}\cap\{x_{n}\geq\varepsilon\}$ with $\alpha=\min\{\alpha_{1},\alpha_{2},\beta\}$, if we denoting $\widehat{C}=C(\|u\|_{L^{\infty}(B_{1}\cap\Omega)}+\|\overrightarrow{\mathbf{F}}\|_{L^{2q}(B_{1}\cap\Omega)}
+\|f\|_{L^{q}(B_{1}\cap\Omega)})$, then by $(\ref{ii})$ we have
\begin{equation}\label{ubr}
|u(x)-u(x_{*})|\leq \widehat{C}r^{\beta}\leq \widehat{C}r^{\alpha},\quad x\in\overline{B_{r}(x_{*})\cap B_{\frac{1}{4}}}.
\end{equation}
Since $w=u$ on $\partial B_{\frac{1}{4}}\cap\Omega$, then $u(x_{*})=w(x_{*})$ and
\begin{equation}\label{w3}
-\widehat{C}r^{\alpha}\leq w(x)-w(x_{*})\leq \widehat{C}r^{\alpha},\quad x\in B_{r}(x_{*})\cap\partial B_{\frac{1}{4}}.
\end{equation}
In addition, by $(\ref{wmp})$ it follows that
\begin{equation}\label{w4}
-2\|u\|_{L^{\infty}(B_{1}\cap\Omega)}\leq w(x)-w(x_{*})\leq 2\|u\|_{L^{\infty}(B_{1}\cap\Omega)}\quad x\in B_{\frac{1}{4}}\cap \{x_{n}\geq\varepsilon\}.\end{equation}
Then we consider a nonnegative harmonic function $\phi(x)=\frac{x_{*}\cdot(x_{*}-x)}{2|x_{*}|r^{2}}$ in $B_{r}(x_{*})\cap B_{\frac{1}{4}}\cap\{\frac{x_{*}\cdot(x_{*}-x)}{|x_{*}|}< 2r^{2}\}$. Combining with $(\ref{w3})$ and $(\ref{w4})$ we have
\begin{eqnarray*}
\left\{
\begin{array}{rcll}
\Delta\left(w(x)-w(x_{*})-(\widehat{C}r^{\alpha}+2\|u\|_{L^{\infty}(B_{1}\cap\Omega)}\phi(x))\right)
&=&0\qquad&\text{in}~~B_{r}(x_{*})\cap B_{\frac{1}{4}}\cap\{\frac{x_{*}\cdot(x_{*}-x)}{|x_{*}|}< 2r^{2}\},\\
w(x)-w(x_{*})-(\widehat{C}r^{\alpha}+2\|u\|_{L^{\infty}(B_{1}\cap\Omega)}\phi(x))&\leq& 0\qquad&\text{on}~~B_{r}(x_{*})\cap B_{\frac{1}{4}}\cap\{\frac{x_{*}\cdot(x_{*}-x)}{|x_{*}|}= 2r^{2}\},\\
w(x)-w(x_{*})-(\widehat{C}r^{\alpha}+2\|u\|_{L^{\infty}(B_{1}\cap\Omega)}\phi(x))&\leq& 0\qquad&\text{on}~~B_{r}(x_{*})\cap \partial B_{\frac{1}{4}}.
\end{array}
\right.
\end{eqnarray*}
By maximum principle, we can get
$$w(x)-w(x_{*})\leq \widehat{C}r^{\alpha}+2\|u\|_{L^{\infty}(B_{1}\cap\Omega)}\phi(x)\quad \text{in}~~B_{r}(x_{*})\cap B_{\frac{1}{4}}\cap\{\frac{x_{*}\cdot(x_{*}-x)}{|x_{*}|}\leq 2r^{2}\}.$$
Similarly, we can also get
$$w(x)-w(x_{*})\geq -\left(\widehat{C}r^{\alpha}+2\|u\|_{L^{\infty}(B_{1}\cap\Omega)}\phi(x)\right)\quad \text{in}~~B_{r}(x_{*})\cap B_{\frac{1}{4}}\cap\{\frac{x_{*}\cdot(x_{*}-x)}{|x_{*}|}\leq 2r^{2}\}.$$
Thus,
$$|w(x)-w(x_{*})|\leq \widehat{C}r^{\alpha}+\|u\|_{L^{\infty}(B_{1}\cap\Omega)}\frac{x_{*}\cdot(x_{*}-x)}{|x_{*}|r^{2}}\quad \text{in}~~B_{r}(x_{*})\cap B_{\frac{1}{4}}\cap\{\frac{x_{*}\cdot(x_{*}-x)}{|x_{*}|}\leq 2r^{2}\}.$$
So when $x\in \overline{B_{r^{3}}(x_{*})}\cap B_{\frac{1}{4}}$ with  $r^{3}\leq\sqrt{\varepsilon_{1}+\varepsilon_{2}}$, since $\overline{B_{r^{3}}(x_{*})}\cap B_{\frac{1}{4}}\subset B_{r}(x_{*})\cap B_{\frac{1}{4}}\cap\{\frac{x_{*}\cdot(x_{*}-x)}{|x_{*}|}< 2r^{2}\}$, it follows that
\begin{eqnarray*}
|w(x)-w(x_{*})|&\leq& \widehat{C}r^{\alpha}+\|u\|_{L^{\infty}(B_{1}\cap\Omega)}r\\
&\leq&C(\|u\|_{L^{\infty}(B_{1}\cap\Omega)}+\|\overrightarrow{\mathbf{F}}\|_{L^{2q}(B_{1}\cap\Omega)}+\|f\|_{L^{q}(B_{1}\cap\Omega)}
)r^{\alpha}.
\end{eqnarray*}
Therefore for arbitrary $\hat{r}$ small enough satisfying $\hat{r}\leq \sqrt{\varepsilon_{1}+\varepsilon_{2}}$, we have
\begin{equation}\label{wf}
|w(x)-w(x_{*})|\leq C(\|u\|_{L^{\infty}(B_{1}\cap\Omega)}+\|\overrightarrow{\mathbf{F}}\|_{L^{2q}(B_{1}\cap\Omega)}+\|f\|_{L^{q}(B_{1}\cap\Omega)}
)\hat{r}^{\frac{\alpha}{3}}\quad\text{in}~~\overline{B_{\hat{r}}(x_{*})}\cap B_{\frac{1}{4}}.
\end{equation}

\
\

{\bf Step 3:} Estimate $\|u-w\|_{L^{\infty}(B_{\frac{1}{4}-\delta}\cap\Omega)}$ where $\delta=\sqrt{\varepsilon_{1}+\varepsilon_{2}}$.

\
\

We consider $u-w$ in $B_{\frac{1}{4}-\delta}\cap\Omega$ where $\delta$ is sufficiently small satisfying $\delta=\sqrt{\varepsilon_{1}+\varepsilon_{2}}$ and we have
\begin{eqnarray*}
\left\{
\begin{array}{rcll}
-D_{j}(a_{ij}D_{i}(u-w))+b_{i}D_{i}(u-w)&=&f-\text{div}\overrightarrow{\mathbf{F}}+
D_{j}((a_{ij}-\delta_{ij})D_{i}w)-b_{i}D_{i}w\quad&\text{in}~~B_{\frac{1}{4}-\delta}\cap\Omega,\\
u-w&=&u-w\quad&\text{on}~~\partial B_{\frac{1}{4}-\delta}\cap\Omega, \\
u-w&=&g-w\quad&\text{on}~~B_{\frac{1}{4}-\delta}\cap\partial\Omega, \\
\end{array}
\right.
\end{eqnarray*}
Then by Theorem $\ref{lemma1}$, take $p=2q$, we get
\begin{eqnarray*}
\|u-w\|_{L^{\infty}(B_{\frac{1}{4}-\delta}\cap\Omega)}&\leq&\|u-w\|_{L^{\infty}(\partial B_{\frac{1}{4}-\delta}\cap\Omega)}
+\|g-w\|_{L^{\infty}(B_{\frac{1}{4}-\delta}\cap\partial\Omega)}\\
&&+C\{\|\overrightarrow{\mathbf{F}}\|_{L^{2q}(B_{\frac{1}{4}-\delta}\cap\Omega)}
+\|f\|_{L^{q}(B_{\frac{1}{4}-\delta}\cap\Omega)}\\
&&+\|(a_{ij}-\delta_{ij})D_{i}w\|_{L^{2q}(B_{\frac{1}{4}-\delta}\cap\Omega)}
+\|b_{i}D_{i}w\|_{L^{q}(B_{\frac{1}{4}-\delta}\cap\Omega)}\}.
\end{eqnarray*}

Next we estimate the right terms of above inequality.

To estimate of $\|u-w\|_{L^{\infty}(\partial B_{\frac{1}{4}-\delta}\cap\Omega)}$, we divide $\partial B_{\frac{1}{4}-\delta}\cap\Omega$ into three parts: $\partial B_{\frac{1}{4}-\delta}\cap\{x_{n}\geq\varepsilon+\delta^{\frac{1}{3}}\}$, $\partial B_{\frac{1}{4}-\delta}\cap\{\varepsilon\leq x_{n}\leq\varepsilon+\delta^{\frac{1}{3}}\}$, $\partial B_{\frac{1}{4}-\delta}\cap\{-\varepsilon\leq x_{n}\leq\varepsilon\}\cap\Omega$. For $x\in\partial B_{\frac{1}{4}-\delta}\cap\{x_{n}\geq\varepsilon\}$, there exists $x_{*}\in\partial B_{\frac{1}{4}}\cap\{x_{n}\geq\varepsilon\}$ such that $x\in\partial B_{\delta}(x_{*})$. Taking $\hat{r}=\delta=\sqrt{\varepsilon_{1}+\varepsilon_{2}}$ in $(\ref{wf})$ we can get for each $x\in\partial B_{\frac{1}{4}-\delta}\cap\{x_{n}\geq\varepsilon+\delta^{\frac{1}{3}}\}$,
\begin{eqnarray*}
|w(x)-w(x_{*})|\leq C(\|u\|_{L^{\infty}(B_{1}\cap\Omega)}+\|\overrightarrow{\mathbf{F}}\|_{L^{2q}(B_{1}\cap\Omega)}
+\|f\|_{L^{q}(B_{1}\cap\Omega)})\delta^{\frac{\alpha}{3}}.
\end{eqnarray*}
When $x\in\partial B_{\frac{1}{4}-\delta}\cap\{\varepsilon\leq x_{n}\leq\varepsilon+\delta^{\frac{1}{3}}\}$, then $x_{*}$ corresponding to $x$ belongs to $\partial B_{\frac{1}{4}}\cap\{\varepsilon\leq x_{n}\leq\varepsilon+\delta^{\frac{1}{3}}+\delta\}$, by $(\ref{wg})$ we have
$$|w(x)|\leq C^{*}(\varepsilon+\delta^{\frac{1}{3}}+\varepsilon)^{\alpha}+3\|g\|_{L^{\infty}(B_{1}\cap\partial\Omega)},
$$
$$|w(x_{*})|\leq C^{*}(\varepsilon+\delta^{\frac{1}{3}}+\delta+\varepsilon)^{\alpha}+3\|g\|_{L^{\infty}(B_{1}\cap\partial\Omega)}.
$$
It follows that
\begin{eqnarray*}
|w(x)-w(x_{*})|&\leq& 2^{\alpha+1}C^{*}(\varepsilon+\delta^{\frac{1}{3}})^{\alpha}+6\|g\|_{L^{\infty}(B_{1}\cap\partial\Omega)}\\
&\leq&2^{\alpha+1}C^{*}(\varepsilon^{\alpha}+\delta^{\frac{\alpha}{3}})+6\|g\|_{L^{\infty}(B_{1}\cap\partial\Omega)}.
\end{eqnarray*}
Then we obtain the estimation of $\|u-w\|_{L^{\infty}(\partial B_{\frac{1}{4}-\delta}\cap\{x_{n}\geq\varepsilon\})}$, i.e. by using $(\ref{ubr})$, we have for each $x\in\partial B_{\frac{1}{4}-\delta}\cap\{x_{n}\geq\varepsilon\}$,
\begin{eqnarray*}
& & |u(x)-w(x)|\\
 &\leq &  |u(x)-u(x_{*})|+|w(x)-w(x_{*})|\\
&\leq&C(\|u\|_{L^{\infty}(B_{1}\cap\Omega)}+\|\overrightarrow{\mathbf{F}}\|_{L^{2q}(B_{1}\cap\Omega)}+\|f\|_{L^{q}(B_{1}\cap\Omega)}
+\|g\|_{L^{\infty}(B_{1}\cap\partial\Omega)})(\delta^{\frac{\alpha}{3}}+\varepsilon^{\alpha})+6\|g\|_{L^{\infty}(B_{1}\cap\partial\Omega)}.
\end{eqnarray*}
When $x\in\partial B_{\frac{1}{4}-\delta}\cap\{-\varepsilon \leq x_{n}\leq\varepsilon\}\cap\Omega$, combining $(\ref{uep})$ with $(\ref{w1})$ we have
\begin{eqnarray*}& & |u(x)-w(x)| \\
 & \leq & C(\|u\|_{L^{\infty}(B_{1}\cap\Omega)}+\|\overrightarrow{\mathbf{F}}\|_{L^{2q}(B_{1}\cap\Omega)}+\|f\|_{L^{q}(B_{1}\cap\Omega)}
+\|g\|_{L^{\infty}(B_{1}\cap\partial\Omega)})(2\varepsilon)^{\alpha}+6\|g\|_{L^{\infty}(B_{1}\cap\partial\Omega)}.
\end{eqnarray*}
Putting the above estimates together, we have
\begin{eqnarray*}& &
 \|u-w\|_{L^{\infty}(\partial B_{\frac{1}{4}-\delta}\cap\Omega)} \\
 & \leq &
C(\|u\|_{L^{\infty}(B_{1}\cap\Omega)}+\|\overrightarrow{\mathbf{F}}\|_{L^{2q}(B_{1}\cap\Omega)}+\|f\|_{L^{q}(B_{1}\cap\Omega)}
+\|g\|_{L^{\infty}(B_{1}\cap\partial\Omega)})(\delta^{\frac{\alpha}{3}}+\varepsilon^{\alpha})+12\|g\|_{L^{\infty}(B_{1}\cap\partial\Omega)}.
\end{eqnarray*}

 To estimate of $\|g-w\|_{L^{\infty}(B_{\frac{1}{4}-\delta}\cap\partial\Omega)}$, duo to $B_{\frac{1}{4}-\delta}\cap\partial\Omega\subset \{-\varepsilon \leq x_{n}\leq\varepsilon\}$, it follows by  $(\ref{w1})$ that
 \begin{eqnarray*}
&& \|g-w\|_{L^{\infty}(B_{\frac{1}{4}-\delta}\cap\partial\Omega)} \\
&\leq &  C(\|u\|_{L^{\infty}(B_{1}\cap\Omega)}+\|\overrightarrow{\mathbf{F}}\|_{L^{2q}(B_{1}\cap\Omega)}+\|f\|_{L^{q}(B_{1}\cap\Omega)}
+\|g\|_{L^{\infty}(B_{1}\cap\partial\Omega)})(2\varepsilon)^{\alpha}+6\|g\|_{L^{\infty}(B_{1}\cap\partial\Omega)}.
\end{eqnarray*}

To estimate of $\|(a_{ij}-\delta_{ij})D_{i}w\|_{L^{2n}(B_{\frac{1}{4}-\delta}\cap\Omega)}$ and $\|b_{i}D_{i}w\|_{L^{n}(B_{\frac{1}{4}-\delta}\cap\Omega)}$, we notice that
$$\|(a_{ij}-\delta_{ij})D_{i}w\|_{L^{2q}(B_{\frac{1}{4}-\delta}\cap\Omega)}\leq  C \|a_{ij}-\delta_{ij}\|_{L^{\infty}(B_{1}\cap\Omega)}\|Dw\|_{L^{\infty}(B_{\frac{1}{4}-\delta}\cap\Omega)}
\leq C \varepsilon_{1}\|Dw\|_{L^{\infty}(B_{\frac{1}{4}-\delta}\cap\Omega)},
$$
$$\|b_{i}D_{i}w\|_{L^{q}(B_{\frac{1}{4}-\delta}\cap\Omega)}\leq \|b_{i}\|_{L^{q}(B_{1}\cap\Omega)}\|Dw\|_{L^{\infty}(B_{\frac{1}{4}-\delta}\cap\Omega)}\leq\varepsilon_{2}\|Dw\|_{L^{\infty}(B_{\frac{1}{4}-\delta}\cap\Omega)}.
$$
By the property of harmonic functions and $w$ we given, we have
$$\|Dw\|_{L^{\infty}(B_{\frac{1}{4}-\delta}\cap\Omega)}\leq \frac{C}{\delta}\|w\|_{L^{\infty}(B_{\frac{1}{4}}\cap\{x_{n}>-\varepsilon\})}
\leq \frac{C}{\delta}\|u\|_{L^{\infty}(B_{1}\cap\Omega)}.
$$
It follows that
\begin{eqnarray*} & & \|(a_{ij}-\delta_{ij})D_{i}w\|_{L^{2q}(B_{\frac{1}{4}-\delta}\cap\Omega)}+
\|b_{i}D_{i}w\|_{L^{q}(B_{\frac{1}{4}-\delta}\cap\Omega)}\\
&\leq& \frac{C(\varepsilon_{1}+\varepsilon_{2})}{\delta}\|u\|_{L^{\infty}(B_{1}\cap\Omega)}\\
&\leq&C\sqrt{\varepsilon_{1}+\varepsilon_{2}}\|u\|_{L^{\infty}(B_{1}\cap\Omega)}.\\
\end{eqnarray*}
Finally we can get the estimation of $\|u-w\|_{L^{\infty}(B_{\frac{1}{4}-\delta}\cap\Omega)}$, that is
\begin{eqnarray*}
\|u-w\|_{L^{\infty}(B_{\frac{1}{4}-\delta}\cap\Omega)}&\leq&C(\varepsilon^{\alpha}
+(\varepsilon_{1}+\varepsilon_{2})^{\frac{\alpha}{6}}+\sqrt{\varepsilon_{1}+\varepsilon_{2}})\|u\|_{L^{\infty}(B_{1}\cap\Omega)}\\
&&+C\|g\|_{L^{\infty}(B_{1}\cap\partial\Omega)}
+C(\|\overrightarrow{\mathbf{F}}\|_{L^{2q}(B_{1}\cap\Omega)}+\|f\|_{L^{q}(B_{1}\cap\Omega)}).
\end{eqnarray*}

\
\

{\bf Step 4:} Approximate $w$ by a harmonic function $h$.

\
\

The following proof is similar to \cite{17}. In the following we denote $\|u\|_{L^{\infty}(B_{1}\cap\Omega)}$ by $\mu$. Let $\Gamma$ be defined for $x\in\mathbb{R}^{n}\setminus\{0\}$ by
\begin{eqnarray*} \Gamma(x)=\Gamma(|x|)=
\begin{cases}
-\frac{1}{2\pi}\ln|x|, \quad &n=2,\\
\frac{1}{(n-2)\omega_{n}}|x|^{2-n},\quad &n\geq3,\\
\end{cases}
\end{eqnarray*}
where $\omega_{n}$ is the surface area of the unit sphere in $\mathbb{R}^{n}$. This function $\Gamma$ is usually called the fundamental solution of the Laplace operator. By a simple calculation, we have $\Delta\Gamma=0$ in $\mathbb{R}^{n}\setminus\{0\}$. Then for any $x=(x',x_{n})\in\overline{B_{\frac{1}{16}}}\cap\{x_{n}\geq-\varepsilon\}$, we take $(x',0)\in T_{\frac{1}{16}}$, we consider a function
$$l(s)=\frac{\Gamma\left(s-\left(x',-\frac{1}{16}-\varepsilon\right)\right)-\Gamma\left(\frac{1}{16}\right)}{\Gamma\left(\frac{3}{16}\right)
-\Gamma\left(\frac{1}{16}\right)}\mu.
$$
Clearly $l(s)$ is harmonic between $B_{\frac{1}{16}}(x',-\frac{1}{16}-\varepsilon)$ and $B_{\frac{3}{16}}(x',-\frac{1}{16}-\varepsilon)$ and
\begin{equation}\label{5}
\left\{
\begin{array}{rcll}
&l(s)&=0\qquad&\text{on}~~\partial B_{\frac{1}{16}}(x',-\frac{1}{16}-\varepsilon),\\
0<&l(s)&<\mu\qquad&\text{between}~B_{\frac{1}{16}}(x',-\frac{1}{16}-\varepsilon)~\text{and}~B_{\frac{3}{16}}(y',-\frac{1}{16}-\varepsilon), \\
&l(s)&=\mu\qquad&\text{on}~~\partial B_{\frac{3}{16}}(x',-\frac{1}{16}-\varepsilon).\\
\end{array}
\right.
\end{equation}
From (\ref{w}) and (\ref{5}) we get $w-l$ satisfies
\begin{eqnarray*}
\left\{
\begin{array}{rcll}
\Delta (w-l)&=&0\qquad&\text{in}~~B_{\frac{3}{16}}(x',-\frac{1}{16}-\varepsilon)\cap\{x_{n}>-\varepsilon\},\\
w-l&\leq&0\qquad&\text{on}~~\partial B_{\frac{3}{16}}(x',-\frac{1}{16}-\varepsilon)\cap\{x_{n}>-\varepsilon\}, \\
w-l&\leq&0\qquad&\text{on}~~B_{\frac{3}{16}}(x',-\frac{1}{16}-\varepsilon)\cap\{x_{n}=-\varepsilon\}.\\
\end{array}
\right.
\end{eqnarray*}
Applying the maximum principle, it yields that
$$w\leq l \quad \text{in}~~B_{\frac{3}{16}}(x',-\frac{1}{16}-\varepsilon)\cap\{x_{n}\geq-\varepsilon\}.
$$
Similarly, repeating the above process for $w+l$, it's easy to get that $$w\geq-l\quad \text{in}~B_{\frac{3}{16}}(x',-\frac{1}{16}-\varepsilon)\cap\{x_{n}\geq-\varepsilon\}.
$$
Consequently we have
$$|w|\leq l \quad \text{in}~~B_{\frac{3}{16}}(x',-\frac{1}{16}-\varepsilon)\cap\{x_{n}\geq-\varepsilon\}.
$$
Furthermore for arbitrary $x_{1}\in\partial B_{\frac{1}{16}}(x',-\frac{1}{16}-\varepsilon)$, in the radial direction, we have
$$\frac{l(s)-l(x_{1})}{|s-x_{1}|}\leq C_{0}\mu \quad \text{between}~B_{\frac{1}{16}}(x',-\frac{1}{16}-\varepsilon)~\text{and}~B_{\frac{3}{16}}(x',-\frac{1}{16}-\varepsilon).
$$
In particular for $x_{1}=(x',-\varepsilon)$, $s=x$, it follows that
$$l(x)\leq C_{0}\mu(x_{n}+\varepsilon).
$$
Since $x$ can be chosen in $\overline{B_{\frac{1}{16}}}\cap\{x_{n}\geq-\varepsilon\}$ arbitrarily, then
\begin{equation}\label{6}
|w(x)|\leq l(x)\leq C_{0}\mu(x_{n}+\varepsilon) \quad \text{in}~\overline{B_{\frac{1}{16}}}\cap\{x_{n}\geq-\varepsilon\}.
\end{equation}
Then (\ref{w}) and (\ref{6}) imply that $v$ satisfies the following conditions,
\begin{eqnarray*}
\left\{
\begin{array}{rcll}
\Delta w&=&0\qquad&\text{in}~~B_{\frac{1}{16}}\cap\{x_{n}>-\varepsilon\},\\
|w(x)|&\leq& C_{0}\mu(x_{n}+\varepsilon)\qquad&\text{in}~~ \overline{B_{\frac{1}{16}}}\cap\{x_{n}\geq-\varepsilon\}, \\
w&=&0\qquad&\text{on}~~B_{\frac{1}{16}}\cap\{x_{n}=-\varepsilon\}.\\
\end{array}
\right.
\end{eqnarray*}
Now it's time to find the harmonic function. We take $h$ be a harmonic function defined in $B_{\frac{1}{16}}$ which is odd with respect to $x_{n}$ and satisfies the following conditions,
\begin{eqnarray*}
\left\{
\begin{array}{rcll}
\Delta h&=&0\qquad&\text{in}~~B_{\frac{1}{16}}^{+},\\
h&=&0\qquad&\text{on}~~T_{\frac{1}{16}}, \\
h&=&w\qquad&\text{on}~~\partial B_{\frac{1}{16}}^{+}\cap\{x\in\mathbb{R}^{n}: x_{n}\geq\varepsilon\},\\
h&=&2C_{0}\mu\varepsilon\qquad&\text{on}~~\partial B_{\frac{1}{16}}^{+}\cap\{x\in\mathbb{R}^{n}: 0<x_{n}<\varepsilon\}.\\
\end{array}
\right.
\end{eqnarray*}
Applying the maximum principle to $h$, we get
\begin{equation}\label{7}|h(x)|\leq C_{0}\mu(x_{n}+\varepsilon)+2C_{0}\mu\varepsilon \quad \text{in}~~~\overline{B_{\frac{1}{16}}^{+}},
\end{equation}
$$\|h\|_{L^{\infty}(B_{\frac{1}{16}}^{+})}\leq(1+2C_{0}\varepsilon)\mu.$$
Next we consider $w-h$ in $B_{\frac{1}{16}}^{+}\cap\Omega$ to obtain
\begin{eqnarray*}
\left\{
\begin{array}{rcll}
&\Delta (w-h)&=0\qquad&\text{in}~~B_{\frac{1}{16}}^{+}\cap\Omega,\\
&w-h&=0\qquad&\text{on}~~\partial B_{\frac{1}{16}}^{+}\cap\{x_{n}\geq\varepsilon\},\\
-4C_{0}\mu\varepsilon\leq &w-h&\leq 0\qquad&\text{on}~~\partial B_{\frac{1}{16}}^{+}\cap\{0<x_{n}<\varepsilon\},\\
-6C_{0}\mu\varepsilon\leq&w-h&\leq6C_{0}\mu\varepsilon\qquad&\text{on}~~B_{\frac{1}{16}}^{+}\cap\partial\Omega,\\
-C_{0}\mu\varepsilon\leq&w-h&\leq C_{0}\mu\varepsilon\qquad&\text{on}~~T_{\frac{1}{16}}\cap\Omega. \\
\end{array}
\right.
\end{eqnarray*}
Using the maximum principle again we obtain
$$|w-h|\leq6C_{0}\mu\varepsilon\quad\text{in}~~B_{\frac{1}{16}}^{+}\cap\Omega.$$
Since $h$ is odd with respect to $x_{n}$ and $B_{\frac{1}{16}}\cap\Omega\subset B_{\frac{1}{16}}\cap\{x\in\mathbb{R}^{n}: x_{n}>-\varepsilon\}$ for some $0<\varepsilon<\frac{1}{16}$, it's easy to get $|h|\leq4C_{0}\mu\varepsilon$ in $B_{\frac{1}{16}}^{-}\cap\Omega.$
Combining with (\ref{6}) we get
$$|w-h|\leq5C_{0}\mu\varepsilon\quad \text{in}~~B_{\frac{1}{16}}^{-}\cap\Omega.$$
From above two inequalities we get
\begin{equation}\label{10}\left\|w-h\right\|_{L^{\infty}(B_{\frac{1}{16}}\cap\Omega)}\leq6C_{0}\mu\varepsilon.
\end{equation}

Then from the estimation of $\|u-w\|_{L^{\infty}(B_{\frac{1}{4}-\delta}\cap\Omega)}$ and (\ref{10}), we can get the following desired result by the triangle inequality,
\begin{eqnarray*}
\left\|u-h\right\|_{L^{\infty}(B_{\frac{1}{16}}\cap\Omega)}&\leq&
\|u-w\|_{L^{\infty}(B_{\frac{1}{4}-\delta}\cap\Omega)}+\|w-h\|_{L^{\infty}(B_{\frac{1}{16}}\cap\Omega)}\\
&\leq&C(\varepsilon^{\alpha}
+(\varepsilon_{1}+\varepsilon_{2})^{\frac{\alpha}{6}}+\sqrt{\varepsilon_{1}+\varepsilon_{2}})\|u\|_{L^{\infty}(B_{1}\cap\Omega)}\\
&&+C\|g\|_{L^{\infty}(B_{1}\cap\partial\Omega)}
+C(\|\overrightarrow{\mathbf{F}}\|_{L^{2q}(B_{1}\cap\Omega)}+\|f\|_{L^{q}(B_{1}\cap\Omega)})\\
&&+6C_{0}\varepsilon\|u\|_{L^{\infty}(B_{1}\cap\Omega)}\\
&\leq& C\|g\|_{L^{\infty}(B_{1}\cap\partial\Omega)}+C(\|\overrightarrow{\mathbf{F}}\|_{L^{2q}(B_{1}\cap\Omega)}+\|f\|_{L^{q}(B_{1}\cap\Omega)})\\
&&+C(\varepsilon^{\alpha}+(\varepsilon_{1}+\varepsilon_{2})^{\frac{\alpha}{6}}+\sqrt{\varepsilon_{1}+\varepsilon_{2}})\|u\|_{L^{\infty}(B_{1}\cap\Omega)}.
\end{eqnarray*}
\end{proof}
\begin{rem}\label{rem2.3}
In fact, $x_{n}$ can be regarded as $x\cdot\vec{e}_{n}$. Therefore in Lemma \ref{fs}, $x_{n}$ can be replaced by $x\cdot\vec{n}$ for arbitrary unit vector $\vec{n}$, i.e Lemma \ref{fs} also holds when $\Omega$ satisfying $$ B_{1}\cap\{x\in\mathbb{R}^{n}: x\cdot\vec{n}>\varepsilon\}\subset B_{1}\cap\Omega\subset B_{1}\cap\{x\in\mathbb{R}^{n}: x\cdot\vec{n}>-\varepsilon\}$$
 for some $0<\varepsilon<\frac{1}{16}$.
\end{rem}
\begin{lm}[Key lemma]\label{kl}
Let $\Omega$, $a_{ij}$ and $b_{i}$ satisfy the assumptions in Lemma $\ref{fs}$, then there exists $\alpha>0$, $1>\lambda>0$ and universal constants $\widetilde{C}, C_{1}, C_{2}, C_{3}>0$ such that for any functions $\overrightarrow{\mathbf{F}}(x,u)\in L^{\infty}(B_{1}\cap\Omega)$, $f\in L^{q}(B_{1}\cap\Omega)$, $g\in L^{\infty}(B_{1}\cap\partial\Omega)$, if $u$ is the solution of
\begin{eqnarray*}
\left\{
\begin{array}{rcll}
-D_{j}(a_{ij}D_{i}u)+b_{i}D_{i}u&=&f-\text{div}\overrightarrow{\mathbf{F}}(x,u)\qquad&\text{in}~~B_{1}\cap\Omega,\\
u&=&g\qquad&\text{on}~~B_{1}\cap\partial\Omega, \\
\end{array}
\right.
\end{eqnarray*}
and $v$ is a Lipschitz solution of
$$
\Delta v=\text{div}\overrightarrow{\mathbf{F}}(x,g(0))\quad \text{in}~~B_{1}
$$
with Lipschitz constant $T$, then there exists a constant K such that
\begin{eqnarray*}
\|u-v-Kx_{n}\|_{L^{\infty}(B_{\lambda}\cap\Omega)}&\leq& C_{1}\|g-v\|_{L^{\infty}(B_{1}\cap\partial\Omega)}
+C_{2}(\lambda^{2}+\varepsilon^{\alpha}+(\varepsilon_{1}+
\varepsilon_{2})^{\frac{\alpha}{6}}+\sqrt{\varepsilon_{1}+\varepsilon_{2}})\|u-v\|_{L^{\infty}(B_{1}\cap\Omega)}\\
&&+C_{3}\left(\|\overrightarrow{\mathbf{F}}(x,u)-\overrightarrow{\mathbf{F}}(x,g(0))\|_{L^{\infty}(B_{1}\cap\Omega)}
+\|f\|_{L^{q}(B_{1}\cap\Omega)}+T(\varepsilon_{1}+\varepsilon_{2})\right),
\end{eqnarray*}
and
$$0<|K|\leq\widetilde{C}\|u-v\|_{L^{\infty}(B_{1}\cap\Omega)}.
$$
\end{lm}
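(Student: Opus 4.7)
The plan is to reduce the problem to Lemma \ref{fs} applied to the difference $w := u - v$, then extract an odd-in-$x_n$ harmonic approximation $h$ and finally take its linear part in the normal direction.

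\textbf{Equation for $w$.} Writing the equation for $v$ in divergence form as $-D_j(\delta_{ij}D_i v) = -\text{div}\,\overrightarrow{\mathbf{F}}(x,g(0))$ and subtracting from the equation for $u$, I obtain, on $B_1\cap\Omega$,
\begin{equation*}
-D_j(a_{ij}D_i w) + b_i D_i w = \tilde{f} - \text{div}\,\overrightarrow{\mathbf{G}}, \qquad w|_{B_1\cap\partial\Omega} = g - v,
\end{equation*}
where $\tilde{f} := f - b_i D_i v$ and the $j$-th component of $\overrightarrow{\mathbf{G}}$ is $F_j(x,u) - F_j(x,g(0)) - (a_{ij}-\delta_{ij})D_i v$. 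Since $v$ is globally $T$-Lipschitz,
\begin{equation*}
\|(a_{ij}-\delta_{ij})D_i v\|_{L^\infty(B_1\cap\Omega)} \leq C\varepsilon_1 T, \qquad \|b_i D_i v\|_{L^q(B_1\cap\Omega)} \leq \varepsilon_2 T,
\end{equation*}
so $\overrightarrow{\mathbf{G}}\in L^\infty$ and $\tilde{f}\in L^q$, and Lemma \ref{fs} applies to $w$.

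\textbf{Harmonic and linear approximation.} Lemma \ref{fs} produces a harmonic $h$ on $B_{1/16}$, odd in $x_n$, with $\|h\|_{L^\infty(B_{1/16})}\leq (1+2C_0\varepsilon)\|w\|_{L^\infty(B_1\cap\Omega)}$ and
\begin{equation*}
\|w-h\|_{L^\infty(B_{1/16}\cap\Omega)} \leq C\|g-v\|_{L^\infty(B_1\cap\partial\Omega)} + C\bigl(\|\overrightarrow{\mathbf{F}}(\cdot,u)-\overrightarrow{\mathbf{F}}(\cdot,g(0))\|_{L^\infty} + \|f\|_{L^q} + T(\varepsilon_1+\varepsilon_2)\bigr) + C\bigl(\varepsilon^\alpha + (\varepsilon_1+\varepsilon_2)^{\alpha/6} + \sqrt{\varepsilon_1+\varepsilon_2}\bigr)\|w\|_{L^\infty(B_1\cap\Omega)}.
\end{equation*}
Since $h$ is harmonic and odd in $x_n$, $h(x',0)\equiv 0$, so $h(0)=0$ and $D_{x_i}h(0)=0$ for $i<n$. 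Setting $K:=\partial_{x_n}h(0)$, the first-order Taylor polynomial of $h$ at the origin is exactly $Kx_n$. Standard interior derivative estimates for harmonic functions then give
\begin{equation*}
|K| \leq C\|h\|_{L^\infty(B_{1/16})} \leq \widetilde{C}\|u-v\|_{L^\infty(B_1\cap\Omega)},
\end{equation*}
together with $\|D^2h\|_{L^\infty(B_{1/32})}\leq C\|h\|_{L^\infty(B_{1/16})}$, so by Taylor's theorem
\begin{equation*}
\|h-Kx_n\|_{L^\infty(B_\lambda)} \leq C\lambda^2\|u-v\|_{L^\infty(B_1\cap\Omega)}, \qquad 0<\lambda\leq 1/32.
\end{equation*}

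\textbf{Conclusion and main obstacle.} A triangle inequality $\|u-v-Kx_n\|_{L^\infty(B_\lambda\cap\Omega)} \leq \|w-h\|_{L^\infty(B_{1/16}\cap\Omega)} + \|h-Kx_n\|_{L^\infty(B_\lambda)}$ combines the two displays and yields the stated inequality, with the $C\lambda^2$ error folded into the $C_2$ coefficient. The main technical point is the derivation of the equation for $w$: correctly book-keeping the extra divergence-form and lower-order terms produced by the non-flat leading coefficients and drift acting on $v$, and verifying that these corrections are absorbed by the $T(\varepsilon_1+\varepsilon_2)$ budget instead of inflating the $\|u-v\|_{L^\infty}$ coefficient. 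Beyond this, the argument is a direct quotation of Lemma \ref{fs} together with routine Taylor estimates for harmonic functions.
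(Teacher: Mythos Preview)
Your proposal is correct and follows essentially the same route as the paper: derive the equation for $w=u-v$ (picking up the correction terms $(a_{ij}-\delta_{ij})D_iv$ and $b_iD_iv$, bounded by $T\varepsilon_1$ and $T\varepsilon_2$), apply Lemma~\ref{fs} to obtain the odd-in-$x_n$ harmonic approximant $h$, set $K=\partial_{x_n}h(0)$, and combine the Taylor remainder $\|h-Kx_n\|_{L^\infty(B_\lambda)}\le C\lambda^2\|u-v\|_{L^\infty}$ with the $\|w-h\|$ bound via the triangle inequality. The book-keeping you flag as the ``main obstacle'' is handled exactly as you describe, and the paper's proof contains no additional ideas.
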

\begin{proof}
By the definition of $u$ and $v$ we get
$$
-D_{j}(a_{ij}D_{i}(u-v))+b_{i}D_{i}(u-v)=
f-\text{div}(\overrightarrow{\mathbf{F}}(x,u)-\overrightarrow{\mathbf{F}}(x,g(0))+
D_{j}((a_{ij}-\delta_{ij})D_{i}v)-b_{i}D_{i}v$$ in $ B_{1}\cap\Omega$, and $$ u-v = g-v$$ on $B_{1}\cap\partial\Omega$.
Then by Lemma $\ref{fs}$, for some $\alpha>0$, there exists a universal constant $C_{0}$ and a harmonic function $h$ defined in $B_{\frac{1}{16}}$ which is odd with respect to $x_{n}$ satisfying
$$\|h\|_{L^{\infty}(B_{\frac{1}{16}})}\leq(1+2C_{0}\varepsilon)\|u-v\|_{L^{\infty}(B_{1}\cap\Omega)}$$
such that
\begin{equation}\label{11}
\begin{aligned}
\|u-v-h\|_{L^{\infty}(B_{\frac{1}{16}}\cap\Omega)}&\leq C\|g-v\|_{L^{\infty}(B_{1}\cap\partial\Omega)}
+C(\|\overrightarrow{\mathbf{F}}(x,u)-\overrightarrow{\mathbf{F}}(x,g(0))\|_{L^{2q}(B_{1}\cap\Omega)}+\|f\|_{L^{q}(B_{1}\cap\Omega)})\\
&\quad+C\left(\|(a_{ij}-\delta_{ij})D_{i}v\|_{L^{2q}(B_{1}\cap\Omega)}
+\|b_{i}D_{i}v\|_{L^{q}(B_{1}\cap\Omega)}\right)\\
&\quad+C(\varepsilon^{\alpha}+(\varepsilon_{1}+\varepsilon_{2})^{\frac{\alpha}{6}}+\sqrt{\varepsilon_{1}+\varepsilon_{2}})\|u-v\|_{L^{\infty}(B_{1}\cap\Omega)}\\
&\leq C\|g-v\|_{L^{\infty}(B_{1}\cap\partial\Omega)}
+C(\|\overrightarrow{\mathbf{F}}(x,u)-\overrightarrow{\mathbf{F}}(x,g(0))\|_{L^{\infty}(B_{1}\cap\Omega)}+\|f\|_{L^{q}(B_{1}\cap\Omega)})\\
&\quad+CT\left(\|a_{ij}-\delta_{ij}\|_{L^{\infty}(B_{1}\cap\Omega)}
+\|b_{i}\|_{L^{q}(B_{1}\cap\Omega)}\right)\\
&\quad+C(\varepsilon^{\alpha}+(\varepsilon_{1}+\varepsilon_{2})^{\frac{\alpha}{6}}+\sqrt{\varepsilon_{1}+\varepsilon_{2}})\|u-v\|_{L^{\infty}(B_{1}\cap\Omega)}.\\
\end{aligned}
\end{equation}

Take $L$ be the first order Taylor polynomial of $h$ at 0, i.e. $L(x)=Dh(0)\cdot x+h(0).$ Then there exists $\xi\in B_{\frac{1}{32}}$ such that for $|x|\leq\frac{1}{32},$
\begin{equation}\label{12}
|h(x)-L(x)|\leq\frac{1}{2}|D^{2}h(\xi)||x|^{2}.
\end{equation}
Since $h=0$ on $B_{\frac{1}{16}}\cap\{x_{n}=0\},$ then $L(x)=Kx_{n}$, where $|K|=|Dh(0)|$.
Note that $h$ is a harmonic function which is odd with respect to $x_{n}$ in $B_{\frac{1}{16}}$, according to the property of harmonic function, when $|x|\leq\frac{1}{32},$
$$|D^{2}h(x)|+|Dh(x)|\leq A\|h\|_{L^{\infty}(B_{\frac{1}{16}})}\leq A(1+2C_{0}\varepsilon)\|u-v\|_{L^{\infty}(B_{1}\cap\Omega)},
$$
where $A$ is a constant depending only on $n$. It follows that
$$|D^{2}h(\xi)|+|K|\leq A(1+2C_{0}\varepsilon)\|u-v\|_{L^{\infty}(B_{1}\cap\Omega)}.
$$
Finally, combining (\ref{11}) with (\ref{12}), if we take $0<\lambda<\frac{1}{32},$ then we have
\begin{eqnarray*}
\|u-v-Kx_{n}\|_{L^{\infty}(B_{\lambda}\cap\Omega)}&\leq&\|u-v-h\|_{L^{\infty}(B_{\lambda}\cap\Omega)}+\|h-L\|_{L^{\infty}(B_{\lambda}\cap\Omega)}\\
&\leq& C\|g-v\|_{L^{\infty}(B_{1}\cap\partial\Omega)}
+C(\|\overrightarrow{\mathbf{F}}(x,u)-\overrightarrow{\mathbf{F}}(x,g(0))\|_{L^{\infty}(B_{1}\cap\Omega)}+\|f\|_{L^{q}(B_{1}\cap\Omega)})\\
&&+CT\left(\|a_{ij}-\delta_{ij}\|_{L^{\infty}(B_{1}\cap\Omega)}
+\|b_{i}\|_{L^{q}(B_{1}\cap\Omega)}\right)\\
&&+C(\varepsilon^{\alpha}+(\varepsilon_{1}+\varepsilon_{2})^{\frac{\alpha}{6}}+\sqrt{\varepsilon_{1}+\varepsilon_{2}})\|u-v\|_{L^{\infty}(B_{1}\cap\Omega)}\\
&&+\frac{1}{2}\lambda^{2}A(1+2C_{0}\varepsilon)\|u-v\|_{L^{\infty}(B_{1}\cap\Omega)}\\
&\leq& C_{1}\|g-v\|_{L^{\infty}(B_{1}\cap\partial\Omega)}+C_{2}(\lambda^{2}+\varepsilon^{\alpha}+(\varepsilon_{1}+
\varepsilon_{2})^{\frac{\alpha}{6}}+\sqrt{\varepsilon_{1}+\varepsilon_{2}})\|u-v\|_{L^{\infty}(B_{1}\cap\Omega)}\\
&&+C_{3}\left(\|\overrightarrow{\mathbf{F}}(x,u)-\overrightarrow{\mathbf{F}}(x,g(0))\|_{L^{\infty}(B_{1}\cap\Omega)}
+\|f\|_{L^{q}(B_{1}\cap\Omega)}+T(\varepsilon_{1}+\varepsilon_{2})\right)
\end{eqnarray*}
\end{proof}
\begin{rem}\label{rem3.1}
Let $\Omega$, $a_{ij}$ and $b_{i}$ satisfy the assumptions in  Lemma $\ref{fs}$, then there exists $\alpha>0$, $0<\lambda<1$ and universal constants $\widetilde{C}, C_{1}, C_{2}, C_{3}>0$ such that for any functions $\overrightarrow{\mathbf{F}}(x)\in L^{\infty}(B_{1}\cap\Omega)$, $f\in L^{q}(B_{1}\cap\Omega)$, $g\in L^{\infty}(B_{1}\cap\partial\Omega)$, if $u$ is the solution of
\begin{eqnarray*}
\left\{
\begin{array}{rcll}
-D_{j}(a_{ij}D_{i}u)+b_{i}D_{i}u&=&f-\text{div}\overrightarrow{\mathbf{F}}(x)\qquad&\text{in}~~B_{1}\cap\Omega,\\
u&=&g\qquad&\text{on}~~B_{1}\cap\partial\Omega, \\
\end{array}
\right.
\end{eqnarray*}
then there exists a constant K such that
\begin{eqnarray*}
\|u-Kx_{n}\|_{L^{\infty}(B_{\lambda}\cap\Omega)}&\leq& C_{1}\|g\|_{L^{\infty}(B_{1}\cap\partial\Omega)}
+C_{2}(\lambda^{2}+\varepsilon^{\alpha}+(\varepsilon_{1}+
\varepsilon_{2})^{\frac{\alpha}{6}}+\sqrt{\varepsilon_{1}+\varepsilon_{2}})\|u\|_{L^{\infty}(B_{1}\cap\Omega)}\\
&&+C_{3}(\|\overrightarrow{\mathbf{F}}\|_{L^{\infty}(B_{1}\cap\Omega)}+\|f\|_{L^{q}(B_{1}\cap\Omega)}),
\end{eqnarray*}
and
$$0<|K|\leq\widetilde{C}\|u\|_{L^{\infty}(B_{1}\cap\Omega)}.
$$
\end{rem}
\begin{rem}\label{rem2.7}
Similar to Remark $\ref{rem2.3}$, $x_{n}$ can also be regarded as $x\cdot\vec{e}_{n}$ and can be substituted by $x\cdot\vec{n}$ for arbitrary unit vector $\vec{n}$ in Lemma $\ref{kl}$ and Remark $\ref{rem3.1}$.
\end{rem}

\section{Boundary Lipschitz regularity under $C^{1,\text{Dini}}$ condition}
In this section we will prove Theorem \ref{mr1}. We divide this proof into four steps, which are similar to Section 3 in \cite{17}. Based on the key lemma in Section 2, we begin to iterate and approximate $u$ by a Lipschitz function $v$ and linear functions in different scales. Finally we will prove that the sum of errors from different scales is convergent. This step can reflect how the Dini conditions are applied.

Before the proof, we first simplify the problem. In fact, we can assume $x_{0}=0$ is a boundary point and we only need to prove the boundary Lipschitz regularity at 0. For convenience, we can choose an appropriate coordinate system such that $\vec{n}$ in Definition $\ref{boundarydef}$ is along the positive $x_{n}$-axis in the proof. So by definition, if $\partial\Omega$ is $C^{1,\text{Dini}}$ at $0,$ then for any $0<r\leq r_{0},$ $B_{r}\cap\partial\Omega\subset B_{r}\cap\left\{|x_{n}|\leq r\omega(r)\right\}$. We denote $v_{0}$ by $v$ and assume that
$$u(0)=g(0)=0,\quad v(0)=0, \quad r_{0}=1,
$$
$$\int_{0}^{1} \frac{\omega(r)}{r} d r \leq 1, \quad \int_{0}^{1} \frac{\sigma(r)}{r} d r \leq 1, \quad \int_{0}^{1} \frac{\varphi(r)}{r} d r \leq 1, \quad \int_{0}^{1} \frac{\omega_{1}(r)}{r} d r \leq 1,
$$
$$\omega_{2}(r)=\varepsilon_{2}r^{1-\frac{n}{q}},\quad \|b_{i}\|_{L^{q}(B_{1}\cap\Omega)}\leq\varepsilon_{2}=\omega_{2}(1),
$$
\begin{equation}\label{omega}
\omega(1)\leq\lambda^{\frac{2}{\alpha}}, \quad \max\{(\omega_{1}(1)+\omega_{2}(1))^{\frac{\alpha}{6}},(\omega_{1}(1)+\omega_{2}(1))^{\frac{1}{2}}\}\leq \lambda^{2}
\end{equation}
where $\varepsilon_{2}<1-\frac{n}{q}$ is small enough in Lemma $\ref{fs}$, $\alpha$ is determined in Lemma $\ref{kl}$ and $\lambda$ is small enough and satisfies
\begin{equation}\label{lambda}
0<\lambda<\frac{1}{32},\quad 4C_{2}\lambda<\frac{1}{4},
\end{equation}
$C_{2}$ is the constant in Lemma $\ref{kl}$ and Remark $\ref{rem3.1}$.

Besides, we can also assume $\vec{a}=0$ in definition \ref{bvdini}, if not, we can consider function $u^{*}:=u-\vec{a}\cdot x$, $\vec{a}=(a_{1},a_{2},\cdots,a_{n})$. We set $g^{*}=g-\vec{a}\cdot x$, then $u^{*}$ satisfies
\begin{eqnarray*}
\left\{
\begin{array}{rcll}
-D_{j}(a_{ij}D_{i}u^{*})+b_{i}D_{i}u^{*}&=&-\text {div}\overrightarrow{\mathbf{F}}(x,u^{*}+\vec{a}\cdot x)+D_{j}((a_{ij}-\delta_{ij})a_{i})-b_{i}a_{i}\qquad&\text{in}~~\Omega,\\
u^{*}&=&g^{*}\qquad&\text{on}~~\partial\Omega. \\
\end{array}
\right.
\end{eqnarray*}
We can also apply Lemmas in Section 2 to $u^{*}$ and repeat the four steps below to get the Lipschitz regularity of $u^{*}$ at 0.

Based on Lemma $\ref{kl}$ and Remark $\ref{rem3.1}$, the following lemma is an iteration result.
\begin{lm}\label{bmkm}
There exist sequences $\{N_{i}\}_{i=0}^{\infty}$ and nonnegative sequences $\{M_{i}\}_{i=0}^{\infty}$, $\{\xi_{i}\}_{i=0}^{\infty}$, $\{\eta_{i}\}_{i=0}^{\infty}$ with $N_{0}=0$, and for $i=0,1,2,\ldots,$
$$M_{i}=\|u-v-N_{i}x_{n}\|_{L^{\infty}(B_{\lambda^{i}}\cap\Omega)},
$$
$$\xi_{i}=C_{2}(\lambda^{2}+\omega(\lambda^{i})^{\alpha}+(\omega_{1}(\lambda^{i})+
\omega_{2}(\lambda^{i}))^{\frac{\alpha}{6}}+\sqrt{\omega_{1}(\lambda^{i})+
\omega_{2}(\lambda^{i})}),
$$
\begin{eqnarray*}
\eta_{i}=C_{1}\|g-v-N_{i}x_{n}\|_{L^{\infty}(B_{\lambda^{i}}\cap\partial\Omega)}+C_{3}\lambda^{i}\left(\|\overrightarrow{\mathbf{F}}(x,u)-\overrightarrow{\mathbf{F}}(x,0)\|_{L^{\infty}(\Omega_{\lambda^{i}})}
+(T+|N_{i}|)(\omega_{1}(\lambda^{i})+\omega_{2}(\lambda^{i}))\right),
\end{eqnarray*}
\begin{eqnarray*}\label{n}
|N_{i+1}-N_{i}|\leq\frac{\widetilde{C}}{\lambda^{i}}\|u-v-N_{i}x_{n}\|_{L^{\infty}(\Omega_{\lambda^{i}})},
\end{eqnarray*}
such that
\begin{equation}\label{induction}
M_{i+1}\leq \xi_{i}M_{i}+\eta_{i}.
\end{equation}
\end{lm}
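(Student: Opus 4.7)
The plan is to prove the lemma by induction on $i$, applying Remark \ref{rem3.1} at each scale to the function $w_{i} := u - v - N_{i}x_{n}$ after a rescaling that brings the small ball $B_{\lambda^{i}} \cap \Omega$ back to the unit scale. The base case $i = 0$ is immediate: with $N_{0} = 0$ one has $w_{0} = u - v$, and the desired estimate with $i = 0$ is exactly the conclusion of Lemma \ref{kl} applied in $B_{1} \cap \Omega$, using $g(0) = 0$; the resulting constant $K$ is taken to be $N_{1}$.

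For the inductive step, given $N_{0}, \ldots, N_{i}$, a direct calculation shows that $w_{i}$ weakly satisfies
$$-D_{j}(a_{ij} D_{i} w_{i}) + b_{i} D_{i} w_{i} = -\text{div}\,\overrightarrow{\mathbf{G}}_{i} - h_{i} \qquad \text{in } B_{\lambda^{i}} \cap \Omega,$$
with boundary data $w_{i} = g - v - N_{i} x_{n}$ on $B_{\lambda^{i}} \cap \partial\Omega$, where
$$\overrightarrow{\mathbf{G}}_{i} := \bigl(\overrightarrow{\mathbf{F}}(x,u) - \overrightarrow{\mathbf{F}}(x,0)\bigr) - (a_{ij} - \delta_{ij}) D_{i}v \,\vec{e}_{j} - N_{i}(a_{nj} - \delta_{nj}) \vec{e}_{j}, \qquad h_{i} := b_{i} D_{i} v + N_{i} b_{n}.$$
Now set $\tilde{w}_{i}(y) := w_{i}(\lambda^{i} y)$ for $y \in B_{1} \cap \Omega^{(i)}$, where $\Omega^{(i)} := \lambda^{-i} \Omega$. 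A change of variables in the weak form produces a divergence-form equation for $\tilde{w}_{i}$ in $B_{1} \cap \Omega^{(i)}$ with rescaled coefficients $\tilde{a}_{ij}(y) := a_{ij}(\lambda^{i} y)$ and effective drift $\lambda^{i} \tilde{b}_{i}(y)$, rescaled divergence source $\lambda^{i} \tilde{\overrightarrow{\mathbf{G}}}_{i}$, and rescaled forcing $\lambda^{2i} \tilde{h}_{i}$. By Assumption \ref{as3}, Definition \ref{boundarydef}, and the smallness $(\ref{omega})$, the relevant parameters $\varepsilon = \omega(\lambda^{i})$, $\varepsilon_{1} = \omega_{1}(\lambda^{i})$, $\varepsilon_{2} = \omega_{2}(\lambda^{i})$ are all below the admissible thresholds in Remark \ref{rem3.1} (using that $\omega, \omega_{1}, \omega_{2}$ are non-decreasing and evaluating at $r = 1$).

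Applying Remark \ref{rem3.1} to $\tilde{w}_{i}$ produces a constant $\tilde{K}_{i}$ with $|\tilde{K}_{i}| \le \widetilde{C} \|\tilde{w}_{i}\|_{L^{\infty}(B_{1} \cap \Omega^{(i)})} = \widetilde{C} M_{i}$ satisfying
$$\|\tilde{w}_{i} - \tilde{K}_{i} y_{n}\|_{L^{\infty}(B_{\lambda} \cap \Omega^{(i)})} \le C_{1} \|\tilde{w}_{i}\|_{L^{\infty}(B_{1} \cap \partial\Omega^{(i)})} + \xi_{i} M_{i} + C_{3}\bigl(\|\lambda^{i} \tilde{\overrightarrow{\mathbf{G}}}_{i}\|_{L^{\infty}} + \|\lambda^{2i} \tilde{h}_{i}\|_{L^{q}}\bigr).$$
Setting $N_{i+1} := N_{i} + \tilde{K}_{i}/\lambda^{i}$, the identity $\tilde{w}_{i}(y) - \tilde{K}_{i} y_{n} = (u - v - N_{i+1} x_{n})(\lambda^{i} y)$ converts the left-hand side into $M_{i+1}$. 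Unscaling the right-hand side by means of $\|\tilde{w}_{i}\|_{L^{\infty}(B_{1} \cap \partial\Omega^{(i)})} = \|g - v - N_{i} x_{n}\|_{L^{\infty}(B_{\lambda^{i}} \cap \partial\Omega)}$, the bound $\|\overrightarrow{\mathbf{G}}_{i}\|_{L^{\infty}(\Omega_{\lambda^{i}})} \le \|\overrightarrow{\mathbf{F}}(x,u) - \overrightarrow{\mathbf{F}}(x,0)\|_{L^{\infty}(\Omega_{\lambda^{i}})} + (T + |N_{i}|)\omega_{1}(\lambda^{i})$, and $\|\lambda^{2i}\tilde{h}_{i}\|_{L^{q}} \le \lambda^{i}(T + |N_{i}|)\omega_{2}(\lambda^{i})$, yields precisely $M_{i+1} \le \xi_{i} M_{i} + \eta_{i}$; the bound $|N_{i+1} - N_{i}| \le \widetilde{C} M_{i}/\lambda^{i}$ is immediate from the choice of $N_{i+1}$.

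The main point requiring care is the bookkeeping of the rescaling, in particular the origin of the distinctive $\lambda^{i}$ prefactor in $\eta_{i}$: divergence scales as $\lambda^{-i}$, so when $\overrightarrow{\mathbf{G}}_{i}$ is transported to the unit-scale equation it collects a factor $\lambda^{i}$, and the forcing $h_{i}$ inherits $\lambda^{2i - in/q}$ which is $\lambda^{i}$ after absorbing the remaining $\lambda^{i(1-n/q)}$ into $\omega_{2}(\lambda^{i})$. This prefactor is what will allow $\sum_{i}\eta_{i}$ to converge once the Dini conditions from Assumptions \ref{as1}, \ref{as3} and Definition \ref{boundarydef} are combined with the geometric factor $\lambda^{i}$ in the subsequent iteration argument.
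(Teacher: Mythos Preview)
Your proposal is correct and follows essentially the same approach as the paper: an induction on $i$, with the base case handled by Lemma~\ref{kl} and the inductive step by rescaling $u-v-N_i x_n$ to the unit ball and applying Remark~\ref{rem3.1}. The only cosmetic difference is that the paper normalizes the rescaled function by an additional factor of $\lambda^{-i}$ (setting $\widetilde{u}(z)=u(\lambda^i z)/\lambda^i$, etc.), which makes the resulting $K$ equal to $N_{i+1}-N_i$ directly rather than $\lambda^i(N_{i+1}-N_i)$; your bookkeeping with $N_{i+1}=N_i+\tilde K_i/\lambda^i$ arrives at the same place, and your explanation of how the $\lambda^i$ prefactor in $\eta_i$ arises from the scaling of the divergence and forcing terms is accurate.
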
\begin{proof}
We prove this lemma inductively by using Remark $\ref{rem3.1}$ repeatedly.

When $i=0$, $N_{0}=0$. 
By Definition $\ref{boundarydef}$ and Remark $\ref{rem1.5}$, we have $B_{1}\cap\partial\Omega\subset\left\{|x_{n}|\leq \omega(1)\right\}$ and for any $x_{0}\in\partial\Omega$, $$\lim\limits_{r\rightarrow0}\frac{|B_{r}(x_{0})\cap\Omega^{c}|}{|B_{r}(x_{0})|}=\frac{1}{2}>0.
$$
Therefore by lemma $\ref{kl}$, combining with the conditions and assumptions on $a_{ij}$ and $b_{i}$, there exists $|N_{1}|\leq\widetilde{C}\|u-v\|_{L^{\infty}(B_{1}\cap\Omega)}$ such that
\begin{eqnarray*}
M_{1}&=&\|u-v-N_{1}x_{n}\|_{L^{\infty}(B_{\lambda}\cap\Omega)}\\&\leq&
C_{1}\|g-v\|_{L^{\infty}(B_{1}\cap\partial\Omega)}\\
&&+C_{2}(\lambda^{2}+\omega(1)^{\alpha}+(\omega_{1}(1)+\omega_{2}(1))^{\frac{\alpha}{6}}+\sqrt{\omega_{1}(1)+\omega_{2}(1)})\|u-v\|_{L^{\infty}(B_{1}\cap\Omega)}\\
&&+C_{3}\left(\|\overrightarrow{\mathbf{F}}(x,u)-\overrightarrow{\mathbf{F}}(x,0)\|_{L^{\infty}(B_{1}\cap\Omega)}
+(T+|N_{0}|)(\omega_{1}(1)+\omega_{2}(1))\right)\\
&=&\xi_{0}M_{0}+\eta_{0},
\end{eqnarray*}
and
$$|N_{1}-N_{0}|\leq\widetilde{C}\|u-v\|_{L^{\infty}(B_{1}\cap\Omega)}.$$

Next we assume that the conclusion is true for $i-1$, that is $M_{i}\leq\xi_{i-1}M_{i-1}+\eta_{i-1}$. We set $\hat{u}=u-v-N_{i}x_{n}$ and consider the equation
\begin{eqnarray*}
\left\{
\begin{array}{rcll}
-D_{j}(a_{ij}D_{i}\hat{u})+b_{i}D_{i}\hat{u}&=&h(x)\qquad&\text{in}~~B_{\lambda^{i}}\cap\Omega,\\
\hat{u}&=&g-v-N_{i}x_{n}\qquad&\text{on}~~B_{\lambda^{i}}\cap\partial\Omega, \\
\end{array}
\right.
\end{eqnarray*}
where $h(x)=-\text{div}(\overrightarrow{\mathbf{F}}(x,u)-\overrightarrow{\mathbf{F}}(x,0))
+D_{j}((a_{ij}-\delta_{ij})D_{i}(v+N_{i}x_{n}))
-b_{i}D_{i}(v+N_{i}x_{n})$.

For $z=(z_{1}, z_{2}, \cdots,z_{n})\in\mathbb{R}^{n}$ we set
$$\widetilde{u}(z)=\frac{u(\lambda^{i}z)}{\lambda^{i}},
\quad\widetilde{v}(z)=\frac{v(\lambda^{i}z)+N_{i}\lambda^{i}z_{n}}{\lambda^{i}},
\quad\widetilde{g}(z)=\frac{g(\lambda^{i} z)}{\lambda^{i}},
$$
$$\overrightarrow{\widetilde{\mathbf{F}}}(z)=\overrightarrow{\mathbf{F}}(\lambda^{i} z,u(\lambda^{i} z))-\overrightarrow{\mathbf{F}}(\lambda^{i} z,0),
$$
$$\widetilde{a_{ij}}(z)=a_{ij}(\lambda^{i} z),\quad \widetilde{b_{i}}(z)=\lambda^{i} b_{i}(\lambda^{i} z).
$$
Then $\widetilde{u}(z)-\widetilde{v}(z)$ is a solution of
\begin{eqnarray*}
\left\{
\begin{array}{rcll}
-D_{j}(\widetilde{a_{ij}}D_{i}(\widetilde{u}-\widetilde{v}))+\widetilde{b_{i}}D_{i}(\widetilde{u}-\widetilde{v})&=&
-\text{div}\overrightarrow{\widetilde{\mathbf{F}}}
+D_{j}((\widetilde{a_{ij}}-\delta_{ij})D_{i}\widetilde{v})
-\widetilde{b_{i}}D_{i}\widetilde{v}\qquad&\text{in}~~B_{1}\cap\widetilde{\Omega},\\
\widetilde{u}-\widetilde{v}&=&\widetilde{g}-\widetilde{v}\qquad&\text{on}~~B_{1}\cap\partial\widetilde{\Omega}, \\
\end{array}
\right.
\end{eqnarray*}
where $\widetilde{\Omega}=\{z:\lambda^{i}z\in\Omega\}.$ Therefore $B_{1}\cap\partial\widetilde{\Omega}\subset B_{1}\cap\left\{|z_{n}|\leq \omega(\lambda^{i})\right\}$. Combining with the conditions and assumptions on $a_{ij}$ and $b_{i}$, we also have
$$\|\widetilde{a_{ij}}-\delta_{ij}\|_{L^{\infty}(B_{1}\cap\widetilde{\Omega})}=
\|{a_{ij}}-\delta_{ij}\|_{L^{\infty}(B_{\lambda^{i}}\cap\Omega)}\leq\omega_{1}(\lambda^{i}),
$$
$$\|\widetilde{b_{i}}\|_{L^{q}(B_{1}\cap\widetilde{\Omega})}=(\lambda^{i})^{1-\frac{n}{q}}\|b_{i}\|_{L^{q}(B_{\lambda^{i}}\cap\Omega)}\leq\omega_{2}(\lambda^{i}).
$$
Then by Remark $\ref{rem3.1}$, there exists a constant $K$ such that
\begin{eqnarray*}
\|\widetilde{u}-\widetilde{v}-Kz_{n}\|_{L^{\infty}(B_{\lambda}\cap\Omega)}
&\leq& C_{1}\|\widetilde{g}-\widetilde{v}\|_{L^{\infty}(B_{1}\cap\partial\widetilde{\Omega})}\\
&&+C_{2}(\lambda^{2}+\omega(\lambda^{i})^{\alpha}+(\omega_{1}(\lambda^{i})+
\omega_{2}(\lambda^{i}))^{\frac{\alpha}{6}}+\sqrt{\omega_{1}(\lambda^{i})+
\omega_{2}(\lambda^{i})})\|\widetilde{u}-\widetilde{v}\|_{L^{\infty}(B_{1}\cap\widetilde{\Omega})}\\
&&+C_{3}\left(\|\overrightarrow{\widetilde{\mathbf{F}}}\|_{L^{\infty}(B_{1}\cap\widetilde{\Omega})}+
\|(\widetilde{a_{ij}}-\delta_{ij})D_{i}\widetilde{v}\|_{L^{\infty}(B_{1}\cap\widetilde{\Omega})}
+\|\widetilde{b_{i}}D_{i}\widetilde{v}\|_{L^{q}(B_{1}\cap\widetilde{\Omega})}\right),
\end{eqnarray*}
where $|K|\leq \widetilde{C}\|\widetilde{u}-\widetilde{v}\|_{L^{\infty}(B_{1}\cap\widetilde{\Omega})}=\frac{\widetilde{C}}{\lambda^{i}}\|u-v-N_{i}x_{n}\|_{L^{\infty}(\Omega_{\lambda^{i}})}.$
Let $N_{i+1}=N_{i}+K,$ scaling back, then we get
\begin{eqnarray*}
M_{i+1}&=&\|u-v-N_{i+1}x_{n}\|_{L^{\infty}(\Omega_{\lambda^{i+1}})}\\
&\leq&C_{1}\|g-v-N_{i}x_{n}\|_{L^{\infty}(B_{\lambda^{i}}\cap\partial\Omega)}\\
&&+C_{2}(\lambda^{2}+\omega(\lambda^{i})^{\alpha}+(\omega_{1}(\lambda^{i})+
\omega_{2}(\lambda^{i}))^{\frac{\alpha}{6}}+\sqrt{\omega_{1}(\lambda^{i})+
\omega_{2}(\lambda^{i})})\|u-v-N_{i}x_{n}\|_{L^{\infty}(B_{\lambda^{i}}\cap\Omega)}\\
&&+C_{3}\lambda^{i}\left(\|\overrightarrow{\mathbf{F}}(x,u)-\overrightarrow{\mathbf{F}}(x,0)\|_{L^{\infty}(\Omega_{\lambda^{i}})}
+(T+|N_{i}|)(\omega_{1}(\lambda^{i})+\omega_{2}(\lambda^{i}))\right)\\
&=&\xi_{i}M_{i}+\eta_{i},
\end{eqnarray*}
and
$$|N_{i+1}-N_{i}|=|K|\leq\frac{\widetilde{C}}{\lambda^{i}}\|u-v-N_{i}x_{n}\|_{L^{\infty}(\Omega_{\lambda^{i}})}.$$
This completes the proof of Lemma $\ref{bmkm}$.
\end{proof}

The following three lemmas are similar to \cite{17}.
\begin{lm}\label{con}
$\sum\limits_{i=0}^{\infty} \frac{M_{i}}{\lambda^{i}}<\infty$ and $\lim \limits_{i \rightarrow \infty} N_{i}$ exists. We set
$$
\lim _{i \rightarrow \infty} N_{i}=\tau.
$$
\end{lm}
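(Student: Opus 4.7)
The plan is to convert the one-step inequality (\ref{induction}) into a geometric contraction on the rescaled quantity $A_i:=M_i/\lambda^i$ and then to sum the resulting series using the Dini integrability of every modulus in sight. First I would check that $\xi_i\le\lambda/4$ uniformly in $i$: since $\omega,\omega_1,\omega_2$ are non-decreasing we have $\omega(\lambda^i)\le\omega(1)$ etc., so the normalizations in (\ref{omega}) yield
$$\omega(\lambda^i)^{\alpha}+\bigl(\omega_1(\lambda^i)+\omega_2(\lambda^i)\bigr)^{\alpha/6}+\sqrt{\omega_1(\lambda^i)+\omega_2(\lambda^i)}\le 3\lambda^{2},$$
hence $\xi_i\le 4C_2\lambda^{2}\le\lambda/4$ by (\ref{lambda}). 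Writing $P_i:=\eta_i/\lambda^{i+1}$, the bound $M_{i+1}\le\xi_iM_i+\eta_i$ becomes $A_{i+1}\le\tfrac14A_i+P_i$, which iterates to
$$A_n\le \frac{A_0}{4^{n}}+\sum_{j=0}^{n-1}\frac{P_j}{4^{n-1-j}},\qquad \sum_{n=0}^\infty A_n\le \tfrac{4}{3}\Bigl(A_0+\sum_{j=0}^\infty P_j\Bigr).$$
So the whole statement reduces to proving $\sum_j P_j<\infty$, after which $\sum A_j<\infty$ and the telescoping estimate $|N_{i+1}-N_i|\le\widetilde C A_i$ automatically give a Cauchy sequence $\{N_i\}$ with limit $\tau$.

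Next I would split $\eta_j$ into its three components and bound each. The $C^{1,\text{Dini}}$ condition on $\partial\Omega$ gives $|x_n|\le\lambda^j\omega(\lambda^j)$ on $\partial\Omega\cap B_{\lambda^j}$, and combined with Definition~\ref{bvdini} and the standing normalization $g(0)=v(0)=0$, $\vec a=0$ this yields
$$\|g-v-N_jx_n\|_{L^\infty(\partial\Omega\cap B_{\lambda^j})}\le \lambda^j\sigma(\lambda^j)+|N_j|\lambda^j\omega(\lambda^j).$$
For the nonhomogeneous term I would use the pointwise bound $|u(x)|\le M_j+(T+|N_j|)\lambda^j$ on $\Omega_{\lambda^j}$ (which follows from the definition of $M_j$ together with $v(0)=0$ and the Lipschitz constant $T$ of $v$), so by the Dini continuity of $\overrightarrow{\mathbf F}$ in its second argument from Assumption~\ref{as1},
$$\|\overrightarrow{\mathbf F}(x,u)-\overrightarrow{\mathbf F}(x,0)\|_{L^\infty(\Omega_{\lambda^j})}\le \varphi\bigl((A_j+T+|N_j|)\lambda^j\bigr).$$
Assembling the pieces produces, for an absolute constant $C$,
$$P_j\le C\Bigl[\sigma(\lambda^j)+|N_j|\omega(\lambda^j)+\varphi\bigl((A_j+T+|N_j|)\lambda^j\bigr)+(T+|N_j|)\bigl(\omega_1(\lambda^j)+\omega_2(\lambda^j)\bigr)\Bigr].$$

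The remaining step is a bootstrap on $|N_j|$. A provisional bound $\sum_j A_j\le S$ yields $|N_j|\le \widetilde CS=:L$ for all $j$. Inserting this into the estimate above, applying (\ref{diniproperty}) to absorb the constant $A_j+T+L$ inside $\varphi$ (giving $\varphi((A_j+T+L)\lambda^j)\le 2(A_j+T+L+1)\varphi(\lambda^j)$), and using the elementary comparison
$$\sum_{j=0}^\infty \psi(\lambda^j)\le\psi(1)+\frac{1}{\log(1/\lambda)}\int_0^1\frac{\psi(r)}{r}\,dr$$
for each $\psi\in\{\sigma,\omega,\omega_1,\omega_2,\varphi\}$ (all Dini integrals bounded by $1$ by normalization), one obtains $\sum_j P_j\le C_*(1+T+L+\sup_jA_j)$. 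Feeding this back into $\sum A_j\le\tfrac{4}{3}(A_0+\sum P_j)$ gives a self-consistent inequality that, with $\lambda$ small as in (\ref{lambda}), can be solved for $S$ in terms of the data alone.

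The main obstacle is precisely this coupling: $P_j$ is linear in $|N_j|$, while $|N_j|$ is itself controlled only through $\sum A_k$, which in turn involves $\sum P_k$. The argument closes because the $1/4$ contraction factor produced in the first step is strong enough to dominate the linear-in-$L$ perturbation, provided the Dini integrals remain small—exactly what is encoded in the normalizations $\int_0^1\psi(r)/r\,dr\le 1$ and the smallness conditions on $\lambda$ in (\ref{omega})–(\ref{lambda}). Beyond this bootstrap, every other ingredient is a direct consequence of Lemma~\ref{bmkm} and standard Dini-sum manipulations.
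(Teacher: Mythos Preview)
Your overall strategy---derive the contraction $A_{i+1}\le\tfrac14 A_i+P_i$ with $A_i=M_i/\lambda^i$ and $P_i=\eta_i/\lambda^{i+1}$, bound $\eta_i$ via the Dini moduli, and deduce that $\{N_i\}$ is Cauchy from $\sum A_i<\infty$---matches the paper exactly, and your estimates for $\xi_i$ and for the individual pieces of $\eta_i$ are correct. The gap is in the last paragraph, where you close the coupling between $|N_j|$ and $\sum A_k$.

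Your ``self-consistent inequality'' takes the form $S\le \tfrac{4}{3}(A_0+\sum P_j)$ with $\sum P_j\le D_0+D_1 S$, where $D_1$ collects the terms linear in $L=\widetilde C S$ (and in $\sup_jA_j\le S$). Concretely, after dividing $\eta_j$ by $\lambda^{j+1}$ the constant in front of each Dini modulus carries a factor $C_1/\lambda$ or $C_3/\lambda$, so
\[
D_1\;\asymp\;\frac{\widetilde C(C_1+C_3)}{\lambda}\sum_{j=0}^\infty\bigl[\omega(\lambda^j)+\varphi(\lambda^j)+\omega_1(\lambda^j)+\omega_2(\lambda^j)\bigr].
\]
The normalizations $\int_0^1\psi(r)/r\,dr\le 1$ only give $\sum_{j\ge0}\psi(\lambda^j)\le \psi(1)+1/\log(1/\lambda)$, and in particular $\psi(1)$ need not be small (e.g.\ nothing in (\ref{omega}) forces $\varphi(1)$ small). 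Hence $D_1$ is a fixed universal constant of order $1/\lambda$, and making $\lambda$ smaller only makes $D_1$ larger. There is no reason for $\tfrac{4}{3}D_1<1$, so the bootstrap does not close with the stated hypotheses; the $1/4$ contraction acts on $A_i$, not on the feedback coefficient $D_1$.

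The paper resolves this by replacing your global bootstrap with a \emph{tail} argument: one uses $|N_k|\le\widetilde C\sum_{i<k}A_i$ directly (no provisional bound $S$), sums the recursion only for $k\ge k_0$, and then chooses $k_0$ large enough that
\[
\frac{\widetilde C C_1}{\lambda}\sum_{i\ge k_0}\omega(\lambda^i)\le\tfrac14,\qquad
\frac{2C_3\widetilde C}{\lambda}\sum_{i\ge k_0}\bigl[\varphi(\lambda^i)+\omega_1(\lambda^i)+\omega_2(\lambda^i)\bigr]\le\tfrac14,
\]
which is possible precisely because these are tails of convergent (Dini) series. The finitely many initial terms $A_0,\dots,A_{k_0}$ are absorbed into a fixed constant $P_{k_0}$. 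This yields $\sum_{i\le k+1}A_i-P_{k_0}\le\tfrac34\sum_{i\le k+1}A_i+C$, which solves cleanly for the partial sums. Inserting this $k_0$ device into your argument would complete the proof; without it, the closing step fails.
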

\begin{proof}
Since $T$ is the Lipschitz constant respect to $v$ and $v(0)=0$, then
$$\|v\|_{L^{\infty}(B_{\lambda^{i}}\cap\partial\Omega)}=\|v-v(0)\|_{L^{\infty}(B_{\lambda^{i}}\cap\partial\Omega)}\leq T\lambda^{i}.
$$
For $k\geq0,$ we suppose $P_{k}=\sum\limits_{i=0}^{k}\frac{M_{i}}{\lambda^{i}}.$ By Lemma $\ref{bmkm}$, noting that $N_{0}=0$, $M_{0}=\|u-v\|_{L^{\infty}(\Omega_{1})}$, then for any $k\geq0,$ we have
\begin{equation}\label{nn}
N_{k+1}\leq N_{k}+\widetilde{C}\frac{M_{k}}{\lambda^{k}}\leq\widetilde{C}P_{k},\quad |N_{k+1}|\leq |N_{k}|+\widetilde{C}\frac{M_{k}}{\lambda^{k}}\leq\widetilde{C}P_{k},
\end{equation}
$$\xi_{k}\leq4C_{2}\lambda^{2}\leq \frac{1}{4}\lambda,
$$
\begin{equation}\label{eta}
\eta_{k}\leq C_{1}\lambda^{k}\sigma(\lambda^{k})+C_{1}|N_{k}|\lambda^{k}\omega(\lambda^{k})
+C_{3}\lambda^{k}\left(\varphi(\|u\|_{L^{\infty}(\Omega_{\lambda^{k}})})+(T+|N_{k}|)(\omega_{1}(\lambda^{k})+\omega_{2}(\lambda^{k}))\right),
\end{equation}
where Definition $\ref{boundarydef}$, $\ref{bvdini}$, Assumption $\ref{as1}$ and $(\ref{omega})$, $(\ref{lambda})$ are used. Then the iteration result $(\ref{induction})$ implies that
$$\frac{M_{k+1}}{\lambda^{k+1}}\leq\frac{\xi_{k}M_{k}}{\lambda^{k+1}}+\frac{\eta_{k}}{\lambda^{k+1}}\leq\frac{1}{4}\frac{M_{k}}{\lambda^{k}}
+\frac{\eta_{k}}{\lambda^{k+1}}.
$$
Now we estimate $\frac{\eta_{k}}{\lambda^{k+1}}$. By $(\ref{eta})$, we have
\begin{equation}\label{m2}
\frac{\eta_{k}}{\lambda^{k+1}}\leq \frac{C_{1}}{\lambda}\left(\sigma(\lambda^{k})+|N_{k}|\omega(\lambda^{k})\right)
+\frac{C_{3}}{\lambda}\left(\varphi(\|u\|_{L^{\infty}(\Omega_{\lambda^{k}})})+(T+|N_{k}|)(\omega_{1}(\lambda^{k})+\omega_{2}(\lambda^{k}))\right).
\end{equation}
Recalling the property of the modulus of continuity (see $(\ref{diniproperty})$) we have
\begin{eqnarray*}
\varphi(\|u\|_{L^{\infty}(\Omega_{\lambda^{k}})})&\leq&\varphi\left(\|u-v-N_{k}x_{n}\|_{L^{\infty}(\Omega_{\lambda^{k}})}+
\|v\|_{L^{\infty}(\Omega_{\lambda^{k}})}+\|N_{k}x_{n}\|_{L^{\infty}(\Omega_{\lambda^{k}})}\right)\\
&\leq&\varphi(M_{k}+T\lambda^{k}+|N_{k}|\lambda^{k})\\
&\leq&2(\frac{M_{k}}{\lambda^{k}}+T+|N_{k}|)\varphi(\lambda^{k}).
\end{eqnarray*}
By substituting the above inequality and $(\ref{nn})$ into $(\ref{m2})$, we obtain for $k\geq1$,
\begin{eqnarray*}
\frac{\eta_{k}}{\lambda^{k+1}}&\leq&\frac{C_{1}}{\lambda}\sigma(\lambda^{k})
+\frac{\widetilde{C}C_{1}}{\lambda}\omega(\lambda^{k})P_{k-1}
+\frac{C_{3}}{\lambda}\left(2(\frac{M_{k}}{\lambda^{k}}+T+|N_{k}|)\varphi(\lambda^{k})+(T+|N_{k}|)(\omega_{1}(\lambda^{k})+\omega_{2}(\lambda^{k}))\right)\\
&\leq&\frac{C_{1}}{\lambda}\sigma(\lambda^{k})+\frac{\widetilde{C}C_{1}}{\lambda}\omega(\lambda^{k})P_{k}
+\left(\frac{2C_{3}(\widetilde{C}+1)}{\lambda}P_{k}
+\frac{2TC_{3}}{\lambda}\right)(\varphi(\lambda^{k})+\omega_{1}(\lambda^{k})+\omega_{2}(\lambda^{k})).
\end{eqnarray*}

Then we take $k_{0}$ large enough (then fixed) such that
$$
\sum_{i=k_{0}}^{\infty} \frac{\widetilde{C}C_{1}}{\lambda}\omega(\lambda^{i}) \leq \frac{\widetilde{C}C_{1}}{\lambda\ln \frac{1}{\lambda}}  \int_{0}^{\lambda^{k_{0}-1}} \frac{\omega(r)}{r} d r \leq \frac{1}{4},
$$
$$
\sum_{i=k_{0}}^{\infty} \frac{2C_{3}\widetilde{C}}{\lambda}(\varphi(\lambda^{i})+\omega_{1}(\lambda^{i})+\omega_{2}(\lambda^{i}))\leq \frac{2C_{3}\widetilde{C}}{\lambda\ln \frac{1}{\lambda}}  \int_{0}^{\lambda^{k_{0}-1}} \frac{\varphi(r)+\omega_{1}(r)+\omega_{2}(r)}{r} d r \leq \frac{1}{4}.
$$
For such $k_{0}(\geq 1),$ we have
$$
\sum_{i=k_{0}}^{\infty}\sigma(\lambda^{i}) \leq \frac{1}{\ln \frac{1}{\lambda}} \int_{0}^{1} \frac{\sigma(r)}{r} d r \leq \frac{1}{\ln \frac{1}{\lambda}},
$$
$$\sum_{i=k_{0}}^{\infty}(\varphi(\lambda^{i})+\omega_{1}(\lambda^{i})+\omega_{2}(\lambda^{i}))
\leq \frac{1}{\ln \frac{1}{\lambda}} \int_{0}^{1} \frac{\varphi(r)+\omega_{1}(r)+\omega_{2}(r)}{r} d r\leq \frac{3}{\ln \frac{1}{\lambda}}.
$$
Therefore for each $k \geq k_{0},$ we have
\begin{eqnarray*}
\sum_{i=k_{0}}^{k}\frac{\eta_{k}}{\lambda^{k+1}}&\leq&\sum_{i=k_{0}}^{k}\frac{C_{1}}{\lambda}\sigma(\lambda^{i})
+\sum_{i=k_{0}}^{k}\frac{\widetilde{C}C_{1}}{\lambda}\omega(\lambda^{i})P_{i}+\sum_{i=k_{0}}^{k}\left(\frac{2C_{3}(\widetilde{C}+1)}{\lambda}P_{i}+\frac{2TC_{3}}{\lambda}\right)(\varphi(\lambda^{i})+\omega_{1}(\lambda^{i})+\omega_{2}(\lambda^{i}))\\
&\leq&\frac{C_{1}}{\lambda\ln \frac{1}{\lambda}}+P_{k+1}\left(\sum_{i=k_{0}}^{k}\frac{\widetilde{C}C_{1}}{\lambda}\omega(\lambda^{i})+
\sum_{i=k_{0}}^{k}\frac{2C_{3}(\widetilde{C}+1)}{\lambda}(\varphi(\lambda^{i})+\omega_{1}(\lambda^{i})+\omega_{2}(\lambda^{i}))\right)+\frac{6TC_{3}}{\lambda\ln \frac{1}{\lambda}} \\
&\leq& \frac{C_{1}+6TC_{3}}{\lambda\ln \frac{1}{\lambda}}+\frac{1}{2} P_{k+1}.
\end{eqnarray*}
It follows that
\begin{eqnarray*}
P_{k+1}-P_{k_{0}}=\sum_{i=k_{0}}^{k} \frac{M_{i+1}}{\lambda^{i+1}}
&\leq&\frac{1}{4}\sum_{i=k_{0}}^{k} \frac{M_{i}}{\lambda^{i}}
+\sum_{i=k_{0}}^{k} \frac{\eta_{i}}{\lambda^{i+1}}\\
&\leq&\frac{1}{4}P_{k+1}+\frac{C_{1}+6TC_{3}}{\lambda\ln \frac{1}{\lambda}}+\frac{1}{2} P_{k+1}\\
&=&\frac{3}{4}P_{k+1}+\frac{C_{1}+6TC_{3}}{\lambda\ln \frac{1}{\lambda}}.
\end{eqnarray*}
Then for all $k \geq k_{0}$,
$$
P_{k+1} \leq \frac{4C_{1}+24TC_{3}}{\lambda\ln \frac{1}{\lambda}}+4 P_{k_{0}}.
$$
Therefore $\left\{P_{k}\right\}_{k=0}^{\infty}$ is bounded. We already proved $\sum\limits_{i=0}^{\infty} \frac{M_{i}}{\lambda^{i}}$ is convergent and $\left\{N_{i}\right\}_{i=0}^{\infty}$ is bounded.

Furthermore, by $(\ref{nn})$ and the definition of $P_{i}$ it's easy to see
$$
N_{i+1}-N_{i} \leq \widetilde{C} \frac{M_{i}}{\lambda^{i}}=\widetilde{C} P_{i}-\widetilde{C} P_{i-1}, \quad \text { for }~~ i \geq 1,
$$
and
$$
N_{i+1}-\widetilde{C}P_{i} \leq N_{i}-\widetilde{C} P_{i-1}, \quad \text { for }~~ i \geq 1.
$$
So $\left\{N_{i}-\widetilde{C}P_{i-1}\right\}_{i=1}^{\infty}$ is a bounded and non-increasing sequence and $\lim\limits_{i \rightarrow+\infty}(N_{i}-\widetilde{C}P_{i-1})$ exists. In conclusion $\lim\limits_{i \rightarrow+\infty} N_{i}$ exists and we set $\tau:=$
$\lim\limits_{i \rightarrow+\infty} N_{i} .$ The proof is finished.
\
\end{proof}
\begin{lm}\label{lim}$\lim\limits_{i\rightarrow+\infty}\frac{M_{i}}{\lambda^{i}}=0.$
\end{lm}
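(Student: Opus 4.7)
The plan is essentially to read the conclusion off Lemma \ref{con}. That lemma shows $\sum_{i=0}^\infty M_i/\lambda^i<\infty$, and since $M_i\ge 0$ and $\lambda>0$, every term of the series is nonnegative; a convergent series of nonnegative reals has general term tending to $0$, which is exactly the statement of Lemma \ref{lim}. So in principle a single line will suffice.

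If a more self-contained argument is preferred --- one that does not quote summability of the series --- I would instead start from the recursion $M_{i+1}\le \xi_i M_i+\eta_i$ provided by Lemma \ref{bmkm}. Dividing by $\lambda^{i+1}$ gives
$$\frac{M_{i+1}}{\lambda^{i+1}}\le \frac{\xi_i}{\lambda}\cdot\frac{M_i}{\lambda^i}+\frac{\eta_i}{\lambda^{i+1}},$$
and by (\ref{omega})--(\ref{lambda}) one already has $\xi_i/\lambda\le 1/4$ for every $i$. The remainder $\eta_i/\lambda^{i+1}$ was expanded in the proof of Lemma \ref{con} as a sum of terms each involving one of $\sigma(\lambda^i)$, $\varphi(\lambda^i)$, $\omega(\lambda^i)$, $\omega_1(\lambda^i)$ or $\omega_2(\lambda^i)$, multiplied by a bounded quantity ($|N_i|$, $T$, or $M_i/\lambda^i$). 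Since each modulus of continuity vanishes at $0$ and both $\{N_i\}$ and $\{M_i/\lambda^i\}$ are bounded by Lemma \ref{con}, this forces $\eta_i/\lambda^{i+1}\to 0$ as $i\to\infty$.

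The lemma would then follow by a standard contraction argument. Given $\varepsilon>0$, one chooses $i_0$ with $\eta_i/\lambda^{i+1}<\varepsilon/2$ for all $i\ge i_0$; iterating $a_{i+1}\le a_i/4+\varepsilon/2$ with $a_i:=M_i/\lambda^i$ yields $a_{i_0+k}\le 4^{-k}a_{i_0}+2\varepsilon/3$, hence $\limsup_i M_i/\lambda^i\le 2\varepsilon/3$, and letting $\varepsilon\to 0$ closes the argument. There is no genuine obstacle here: the entire content of Lemma \ref{lim} is contained in the estimates already carried out for Lemma \ref{con}, so whichever route one takes the proof is essentially immediate.
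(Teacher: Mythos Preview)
Your primary argument is exactly the paper's own proof: Lemma \ref{con} gives $\sum_{i=0}^{\infty} M_i/\lambda^i<\infty$ with nonnegative terms, hence the general term tends to $0$. The alternative contraction route you sketch is also valid but unnecessary here, since the paper dispatches the lemma in a single sentence via the series convergence.
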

\begin{proof}
The proof is straightforward from Lemma \ref{con} since $\sum\limits_{i=0}^{\infty} \frac{M_{i}}{\lambda^{i}}$ is convergent.
\end{proof}
\begin{lm}For each $i=0,1,2, \ldots,~$ there exists $B_{i}$ such that $\lim\limits_{i \rightarrow \infty} B_{i}=0$ and that $$\left\|u-v-\tau x_{n}\right\|_{L^{\infty}(\Omega_{\lambda^{i}})} \leq B_{i} \lambda^{i}.$$
\end{lm}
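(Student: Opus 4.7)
The plan is to obtain the bound by the triangle inequality, splitting the target quantity into two pieces that have already been controlled by earlier lemmas.

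First I would write, for every $x \in \Omega_{\lambda^{i}}$,
\[
|u(x) - v(x) - \tau x_{n}| \;\le\; |u(x) - v(x) - N_{i} x_{n}| \;+\; |(N_{i} - \tau)\, x_{n}|.
\]
Taking the $L^{\infty}$ norm over $\Omega_{\lambda^{i}}$, the first term on the right is exactly $M_{i}$ by the definition preceding Lemma \ref{bmkm}. For the second term, since $\Omega_{\lambda^{i}} \subset B_{\lambda^{i}}$ we have $|x_{n}| \le |x| < \lambda^{i}$, giving
\[
\|(N_{i}-\tau)\,x_{n}\|_{L^{\infty}(\Omega_{\lambda^{i}})} \le |N_{i} - \tau|\,\lambda^{i}.
\]

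Combining these two estimates yields
\[
\|u - v - \tau x_{n}\|_{L^{\infty}(\Omega_{\lambda^{i}})} \;\le\; M_{i} + |N_{i} - \tau|\,\lambda^{i} \;=\; \lambda^{i}\!\left(\frac{M_{i}}{\lambda^{i}} + |N_{i} - \tau|\right).
\]
I would therefore define
\[
B_{i} := \frac{M_{i}}{\lambda^{i}} + |N_{i} - \tau|,
\]
which satisfies the desired inequality by construction.

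It remains to verify $B_{i}\to 0$. The first summand tends to $0$ by Lemma \ref{lim}, which gives $M_{i}/\lambda^{i} \to 0$. The second summand tends to $0$ by Lemma \ref{con}, which establishes that $N_{i} \to \tau$. Hence $B_{i} \to 0$, completing the proof. There is really no substantive obstacle here; the work was all done in Lemma \ref{con} and Lemma \ref{lim}, and this statement simply repackages those two convergence facts via the triangle inequality in a form suitable for the final Lipschitz estimate at the boundary point.
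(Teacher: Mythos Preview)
Your proof is correct and follows essentially the same approach as the paper: both apply the triangle inequality to split into $M_i$ and $|N_i-\tau|\lambda^i$, define $B_i = M_i/\lambda^i + |N_i-\tau|$, and invoke Lemmas~\ref{con} and~\ref{lim} for the convergence. The arguments are virtually identical.
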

\begin{proof}For any $i \geq 0$ we have
$$
\left\|u-v-\tau x_{n}\right\|_{L^{\infty}(\Omega_{\lambda^{i}})} \leq\|u-v-N_{i}x_{n}\|_{L^{\infty}(\Omega_{\lambda^{i}})}+\|N_{i}x_{n}-\tau x_{n}\|_{L^{\infty}(\Omega_{\lambda^{i}})}.
$$
Using $(\ref{induction})$ we get
$$
\left\|u-v-\tau x_{n}\right\|_{L^{\infty}(\Omega_{\lambda^{i}})}  \leq M_{i}+\lambda^{i}|N_{i}-\tau|.
$$
We set $B_{i}=\frac{M_{i}}{\lambda^{i}}+|N_{i}-\tau|$, then
$$
\left\|u-v-\tau x_{n}\right\|_{L^{\infty}(\Omega_{\lambda^{i}})}  \leq B_{i}\lambda^{i}.
$$
At the same time, by Lemma \ref{con} snd \ref{lim} we have
$$
\lim _{i \rightarrow \infty} B_{i}=0.
$$
The proof is completed.
\end{proof}
\noindent {\bf Proof of Theorem  \ref{mr1}}
From above four lemmas we already show that $u-v$ is differentiable at 0. Since $v$ is a Lipschitz function, it's clear that $u$ is Lipschitz at 0.
\begin{rem}
If the domain satisfies Reifenberg $C^{1,\text{Dini}}$ condition(see Definition $\ref{exrei}$), we can also get the pointwise boundary Lipschitz regularity of $u$, that is Theorem $\ref{mr2}$. The proof is similar to above four steps, in which we need to use Lemma $\ref{rem1.4}$, i.e. the convergence of the $\vec{n}_{r}$ in different scales additionally.
\end{rem}

\end{document}